\def\le{\leqslant}
\def\ge{\geqslant}
\newtheorem{Hypothesis}{Hypothesis}
\newtheorem{theorem}{Theorem}
\newtheorem{corollary}{Corollary}
\newtheorem{lemma}{Lemma}
\newtheorem{prop}{Proposition}
\def \vol{\mathrm {Vol}}
\newcommand\cL{\mathcal{L}}
\newcommand{\Z}{\mathbb{Z}}
\newcommand{\R}{\mathbb{R}}
\newcommand{\QQ}{\mathbb{Q}}
\def\cL{{\mathcal L}}
\def\ov\QQ{\overline{\QQ}}
\def\({\left(}
\def\){\right)}
\title{On certain bilinear sums with modular square roots and applications}
\subjclass[2010]{11L07, 11L26, 11L40; 11N35, 11H06, 11P21, 11D79}
\keywords{modular square roots, bilinear exponential sums, additive energy , large sieve with square moduli}
\author{Stephan Baier}
\address{Stephan Baier,
Ramakrishna Mission Vivekananda Educational and Research Institute, Department of Mathematics, G. T. Road, PO Belur Math, Howrah, West Bengal 711202, India}
\email{stephanbaier2017@gmail.com}
\begin{document}
\maketitle
\begin{abstract} We extend bounds on additive energies of modular square roots by Dunn, Kerr, Shparlinski, Shkredov and Zaharescu and apply these results to obtain bounds on certain bilinear exponential sums with modular square roots. From here, we make partial progress on the large sieve for square moduli.  
\end{abstract}

\tableofcontents

\section{Introduction and main results}
{\bf Notations.} 
\begin{itemize}
\item Throughout this article, following usual custom, we assume that $\varepsilon$ is an arbitrarily small positive number. All implied constants are allowed to depend on $\varepsilon$. 
\item For a real number $x$ and natural number $r$, we set 
$$
e(x):=e^{2\pi i x} \quad \mbox{and} \quad e_r(x):=e\left(\frac{x}{r}\right). 
$$
\item Finite sequences $(\alpha_l)$,$(\beta_m)$,... are abbreviated by bold letters $\boldsymbol{\alpha}$,$\boldsymbol{\beta}$,.... 
\item The functions $\tau(n)$ and $\omega(n)$ denote the numbers of divisors and prime divisors of $n\in \mathbb{N}$, respectively. The function $\mu(n)$ is the M\"obius function.  
\item We denote the greatest common divisor and the least common multiple of two integers $a,b$, not both equal to zero, by $(a,b)$ and $[a,b]$, respectively. 
\item The notation $p^k||r$ means that $p^k$ is the largest power dividing $r$ of a prime $p$. 
\item The symbols $\mathbb{P}$, $\mathbb{N}$, $\mathbb{Z}$, $\mathbb{R}$ and $\mathbb{C}$ stand for the sets of prime numbers, natural numbers, integers, real numbers and complex numbers, respectively. We denote by $\mathbb{R}_{\ge 0}$ the set of non-negative real numbers and by $\mathbb{R}_{>0}$ the set of positive real numbers.   
\item The distance of $x\in \mathbb{R}$ to the nearest integer is denoted as $||x||$.
\item The functions $\lfloor x \rfloor$ and $\lceil x \rceil$ are the floor and ceiling function, respectively.
\item For functions $f:\mathcal{D}\rightarrow \mathbb{C}$ and $g:\mathcal{D}\rightarrow \mathbb{R}_{>0}$, the notations $f(x)=O(g(x))$ and $f(x)\ll g(x)$ indicate that there is a constant $C>0$ such that $|f(x)|\le Cg(x)$ for all $x\in \mathcal{D}$, and the notation $f(x)\asymp g(x)$ indicates that there are constants $C_2>C_1>0$ such that  
$C_1g(x)\le f(x)\le C_2g(x)$ for all $x\in \mathcal{D}$. 
\end{itemize}

The starting point of this article is the following best known result on the large sieve for square moduli, proved in \cite{BaiZhao1} and recently revisited in \cite{BaiLS}.

\begin{theorem} \label{bestknownls} Let $Q,N\ge 1$, $M\in \mathbb{R}$ and $(a_n)_{M<n\le M+N}$ be any sequence of complex numbers.  Then
\begin{equation} \label{thebound}
\begin{split}
& 
\sum\limits_{q\le Q}\sum\limits_{\substack{a=1\\ (q,a)=1}}^{q^2} \left|\sum\limits_{M<n\le M+N} a_ne\left(\frac{na}{q^2}\right)\right|^2\\
\ll & (QN)^{\varepsilon}\left(Q^3+N+\min\left\{Q^2N^{1/2},Q^{1/2}N\right\}\right)Z,
\end{split}
\end{equation}
where 
$$
Z:=\sum\limits_{M<n\le M+N} |a_n|^2.
$$
\end{theorem}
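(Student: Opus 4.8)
The plan is to prove Theorem~\ref{bestknownls} by an elementary Fourier-analytic reduction: peel off a diagonal contribution of size $Q^3Z$, recast what is left as a Toeplitz quadratic form in $(a_n)$, and reduce the whole estimate to a single pointwise bound for an exponential sum carrying Ramanujan-sum coefficients.

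\emph{Step 1 (reduction to a correlation sum).} First I would write each $a$ modulo $q^2$ with $(a,q)=1$ as $a=b+cq$, where $b$ runs over residues modulo $q$ coprime to $q$ and $c$ over all residues modulo $q$, so that $e(na/q^2)=e(nb/q^2)e(nc/q)$. Squaring out and executing the sum over $c$ by orthogonality forces $n_1\equiv n_2\pmod q$; writing $n_1-n_2=qk$ and summing over $b$ produces a Ramanujan sum. This yields the exact identity
\begin{equation*}
\sum_{q\le Q}\sum_{\substack{a=1\\(a,q)=1}}^{q^2}\left|\sum_{M<n\le M+N}a_ne\left(\frac{na}{q^2}\right)\right|^2=\sum_{q\le Q}q\sum_{|k|<N/q}c_q(k)\,E(qk),
\end{equation*}
where $c_q$ denotes the Ramanujan sum and $E(m):=\sum_n a_{n+m}\overline{a_n}$, the sum over those $n$ with $n,n+m\in(M,M+N]$. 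The term $k=0$ contributes $\bigl(\sum_{q\le Q}q\,\varphi(q)\bigr)Z\asymp Q^3Z$, our first main term, so it remains to bound $\mathrm{OFF}:=\sum_{q\le Q}q\sum_{0<|k|<N/q}c_q(k)E(qk)$. (Running instead the duality principle together with a smooth majorant for $\mathbf 1_{(M,M+N]}$ leads to the same correlation sum.)

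\emph{Step 2 (a Toeplitz bound).} Substituting $c_q(k)=\sum_{d\mid(q,k)}\mu(q/d)d$ and setting $q=de$, $k=dk'$, the coefficient of $E(m)$ in $\mathrm{OFF}$ becomes $\gamma(m)=\sum_{de\le Q,\,d^2e\mid m}\mu(e)\,d^2e$, which is real and even in $m$, so $\mathrm{OFF}=\sum_{0<|m|<N}\gamma(m)E(m)$ is a Hermitian Toeplitz quadratic form in $(a_n)$. By the identity $\sum_{n,n'}a_{n'}\overline{a_n}\gamma(n'-n)=\int_0^1\widehat\gamma(\theta)\,\bigl|\sum_n a_ne(-n\theta)\bigr|^2\,d\theta$ and Parseval, $|\mathrm{OFF}|\le\|\widehat\gamma\|_\infty Z$, where
\begin{equation*}
\widehat\gamma(\theta)=\sum_{0<|m|<N}\gamma(m)e(m\theta)=\sum_{de\le Q}\mu(e)\,d^2e\sum_{0<|k|<N/(d^2e)}e\left(d^2ek\theta\right).
\end{equation*}
Thus everything reduces to the uniform estimate $\sup_\theta|\widehat\gamma(\theta)|\ll(QN)^{\varepsilon}\bigl(N+\min\{Q^2N^{1/2},Q^{1/2}N\}\bigr)$.

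\emph{Step 3 (the exponential sum — the main obstacle).} For fixed $\theta$ I would carry out the inner geometric sum over $k$ (bounding it by $\min\bigl(2N/(d^2e),\,\|d^2e\theta\|^{-1}\bigr)$), fix a rational approximation $|\theta-a/b|\le b^{-2}$ with $(a,b)=1$ and $b$ at our disposal, and split the sum over $(d,e)$ into a resonant part, where $d^2e\theta$ lies very close to an integer, and a complementary part. The point at which the real work begins — and the step I expect to be the obstacle — is that the oscillation of the Möbius weight $\mu(e)$ must be exploited: a crude application of the triangle inequality leaves one stuck with the weaker bound $O\!\left((QN)^\varepsilon NQ\right)$, whereas using cancellation in the $e$-sum (equivalently, the fact that $\sum_{0<|k|<N/q}c_q(k)\ll q^{1+\varepsilon}$ for $q\ge2$, far better than the trivial $O(N)$) keeps the resonant contribution at size $N^{1+\varepsilon}$. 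Handling the complementary part via the standard divisor-weighted estimate for sums $\sum_r\tau(r)\min\!\left(N,\,r/\|r\theta\|\right)$ and then choosing $b$ optimally produces the two competing bounds $Q^{1/2}N$ (superior when $N\le Q^3$) and $Q^2N^{1/2}$ (superior when $N\ge Q^3$); combined with the diagonal term $Q^3Z$ from Step~1, this gives \eqref{thebound}. The delicate part throughout is the book-keeping of the modulus $d^2e$ together with the uniformity in $\theta$, which is precisely the technical core of the argument of \cite{BaiZhao1} and of its reworking in \cite{BaiLS}.
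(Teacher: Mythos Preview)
First, note that the paper does not itself prove Theorem~\ref{bestknownls}; it simply quotes the result from \cite{BaiZhao1} and \cite{BaiLS}. The argument in those references --- partly reviewed in Section~4.1 of the present paper --- proceeds by reducing the large sieve to a pointwise bound for the Farey-fraction counting function $P(\alpha)=\sharp\{(q,a):q\le Q,\ (a,q)=1,\ |a/q^{2}-\alpha|\le 1/N\}$, and then estimating $P(\alpha)$ via Diophantine approximation of $\alpha$.

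Your Steps~1--2 are correct: the identity in Step~1 and the Toeplitz bound $|\mathrm{OFF}|\le\|\widehat\gamma\|_{\infty}Z$ in Step~2 are both valid. This is, however, a genuinely different route from \cite{BaiZhao1}: they never form your exponential sum $\widehat\gamma(\theta)$, and your assertion that Step~3 is ``precisely the technical core of the argument of \cite{BaiZhao1}'' is not accurate. The two reductions are related (bounding $\|\widehat\gamma\|_{\infty}$ is essentially the large sieve for the extremal sequence $a_{n}=e(-n\theta_{0})$, which in turn is governed by $P(\theta_{0})$), but they are not the same computation.

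The real gap is Step~3. After your reduction one needs $\sup_{\theta}|\widehat\gamma(\theta)|\ll (QN)^{\varepsilon}(N+\min\{Q^{2}N^{1/2},Q^{1/2}N\})$. Expanding $c_{q}(k)$ and taking absolute values gives only
\[
|\widehat\gamma(\theta)|\le\sum_{q\le Q}q\sum_{\substack{a=1\\(a,q)=1}}^{q}\min\Bigl(\tfrac{N}{q},\ \|a/q+q\theta\|^{-1}\Bigr)\ll \sum_{q\le Q}\bigl(N+q^{2}\log q\bigr)\ll QN+Q^{3}\log Q,
\]
which misses the target $Q^{1/2}N$ by a full factor of $Q^{1/2}$ in the critical range $N\le Q^{3}$. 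The M\"obius cancellation you invoke rescues the resonant part at $\theta=0$ (indeed $\sum_{0<|k|<N/q}c_{q}(k)\ll q^{1+\varepsilon}$), but for generic $\theta$ the phases $e(d^{2}ek\theta)$ destroy this cancellation, and you give no mechanism that is uniform in $\theta$. Likewise, the ``standard divisor-weighted estimate for $\sum_{r}\tau(r)\min(N,r\|r\theta\|^{-1})$'' you appeal to is not standard: the usual lemma handles $\sum_{r\le R}\min(N,\|r\theta\|^{-1})$, and the extra factor $r$ (with $r$ ranging up to $Q^{2}$) is exactly what makes the problem hard. In short, your reformulation is valid but is a restatement rather than a simplification; the claimed uniform bound in Step~3 is where the entire content lies, and the sketch you give does not supply it.
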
 

We keep the above definition of $Z$ throughout the sequel.

The above result has found a lot of arithmetic applications (see \cite{BaiZhao},  \cite{BPS}, \cite{BFKS},  \cite{Mat}, \cite{Mer}, \cite{SZ},  \cite{TuPa}). For a more detailed history of results related to the large sieve for square moduli, see \cite{BaiLS}.  A conjecture of Zhao \cite{Zhao1} predicts that the left-hand side of \eqref{thebound} should be bounded by $\ll (QN)^{\varepsilon}(Q^3+N)Z$.  In the said article \cite{BaiLS}, we derived a conditional improvement of \eqref{thebound}. Of particular interest was the "critical" point $N=Q^3$ for which \eqref{thebound} yields an estimate by $\ll Q^{1/2+\varepsilon}NZ$. There has been no unconditional improvement of this bound for the last 20 years. However, we proved in \cite{BaiLS} that the exponent $1/2$ therein can be replaced by $1/2-\eta$ with $\eta=1/135$ under the assumption of a set of certain reasonable hypotheses on additive energies of modular square roots. In this article, we do not make unconditional progress either but streamline the arguments and replace the said set of hypotheses by a single simpler and more tractable hypothesis. While our motivation comes from the large sieve for square moduli, our main result in this article is Theorem \ref{bilinearbound} below, which is unconditional and may have other applications. 

As described in \cite{BaiLS}, the large sieve for square moduli relates to bilinear sums of the form 
\begin{equation} \label{biligoal}
\sum\limits_{|l|\le L}  \sum\limits_{\substack{1\le m\le M\\ k^2\equiv jm\bmod{r}}} \alpha_l\beta_m e_r\left(lk\right),
\end{equation}
where $r\in \mathbb{N}$ and $(r,j)=1$.
What we achieve {\it unconditionally} in this article is to establish non-trivial bounds for the bilinear sums in 
\eqref{biligoal} in situations when both $L$ and $M$ are much smaller than the square root of the modulus $r$. We refer to a solution $k$ of the congruence $k^2\equiv s\bmod{r}$ as a modular square root of $s$ modulo $r$. By abuse of notation, we denote by $\sqrt{s}$ the {\it collection} of all modular square roots of $s$ modulo $r$, if existent. Thus, we are interested in sums of the form
$$
\Sigma(r,j,L,M,\boldsymbol{\alpha},\boldsymbol{\beta}):=\sum\limits_{|l|\le L} \sum\limits_{1\le m\le M} \alpha_l\beta_m e_r\left(l\sqrt{jm}\right),
$$  
where $\sqrt{jm}$ should not be mistaken for the ordinary square root of a positive real number. We point out that bilinear sums of the shape 
$$
\sum\limits_{l\in \mathcal{X}} \sum\limits_{m\in \mathcal{Y}} \alpha_l\beta_m e_r\left(hlm^t\right)
$$
with $r$ prime, $\mathcal{X}$, $\mathcal{Y}\in \subset \mathbb{F}_r^{\times}$ and $t\in \mathbb{Z}$ where considered in \cite{BagShp}. The case of $t=-1$ (Kloosterman fractions) was elaborated on in \cite{KSXW}. Bounds for bilinear sums of the form 
$$
\sum\limits_{1\le m\le M} \sum\limits_{1\le n\le N} \beta_m\gamma_n e_r\left(h\sqrt{jmn}\right)
$$
with $r$ prime were derived and applied in \cite{DKSZ}, \cite{KSSZ}, \cite{SSZ22} and \cite{SSZ}. 

More generally, we shall establish bounds for sums of the form
\begin{equation} \label{Sigmadeff}
\Sigma(r,j,L,M,\boldsymbol{\alpha},\boldsymbol{\beta},f):=\sum\limits_{|l|\le L}\sum\limits_{1\le m\le M} \alpha_l\beta_m e_r\left(l\sqrt{jm}\right)e(lf(m)),
\end{equation}
where $f: [1,M]\rightarrow \mathbb{R}$ is a continuously differentiable function such that $g(x)=e(lf(x))$ does not oscillate too wildly if $|l|\le L$. These type of sums come up in connection with the large sieve for square moduli in \cite{BaiLS}.  They contain two oscillating terms, an arithmetic term $e_r\left(l\sqrt{jm}\right)$ and an analytic term $e(lf(m))$, both being linear in $l$. Our main result is the following.

\begin{theorem} \label{bilinearbound} Suppose that $r,j\in \mathbb{N}$, $(r,j)=1$ and $1\le L,M\le r$. Let $f:[1,M]\rightarrow \mathbb{R}$ be a continuously differentiable function such that $|f'(x)|\le F$ on $[1,M]$, where 
$F\le L^{-1}$. Let $\boldsymbol{\alpha}=(\alpha_l)_{|l|\le L}$ and $\boldsymbol{\beta}=(\beta_m)_{1\le m\le M}$ be any finite sequences of complex numbers and suppose that $H\in \mathbb{N}$ satisfies 
\begin{equation} \label{Hrange}
1\le H\le \min\left\{\frac{1}{LF},M\right\}.
\end{equation}
Then we have   
\begin{equation} \label{generalbilinearbound}
\begin{split}
&\Sigma(r,j,L,M,\boldsymbol{\alpha},\boldsymbol{\beta},f)\\   \ll & 
\left(H^{-1/2}L^{1/2}M+ H^{1/4}L^{1/4}M+H^{-1/4}L^{1/4}M^{3/4}r^{1/4}\right)||\boldsymbol{\alpha}||_2||\boldsymbol{\beta}||_{\infty} r^{\varepsilon},
\end{split}
\end{equation}
where 
$$
||\boldsymbol{\alpha}||_2:=\left(\sum\limits_{|l|\le L} |\alpha_l|^2\right)^{1/2} \quad \mbox{and} \quad ||\boldsymbol{\beta}||_{\infty}:=\max\limits_{1\le m\le M} |\beta_m|. 
$$
\end{theorem}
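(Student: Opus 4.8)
The plan is to reduce the bilinear sum to a point where we can exploit a bound on the additive energy of modular square roots via Cauchy--Schwarz, controlling the analytic term $e(lf(m))$ by splitting the $m$-range into short blocks. First I would partition the interval $[1,M]$ into $O(M/H+1)$ consecutive blocks $\cI$ of length at most $H$. On each such block the condition $H\le 1/(LF)$ together with $|f'(x)|\le F$ forces $|lf(x)-lf(x_0)|\le LFH\le 1$ for any fixed reference point $x_0\in\cI$, so by partial summation (Abel summation in the variable $m$) we may replace $e(lf(m))$ by the constant $e(lf(x_0))$ at the cost of a bounded factor; this constant then merges harmlessly into $\alpha_l$, since it depends only on $l$ (and the fixed block). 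Thus, up to $r^\varepsilon$ and the factor $\|\bbeta\|_\infty$, it suffices to bound, for each block $\cI$,
\[
T(\cI):=\sum_{|l|\le L}\Bigl|\sum_{m\in\cI}\beta_m e_r\bigl(l\sqrt{jm}\bigr)\Bigr|
\]
or rather its $\ell^2$-averaged form against $\balpha$, and then sum over the $O(M/H+1)$ blocks.

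The core step is to estimate $\sum_{|l|\le L}|\alpha_l|\,|\sum_{m\in\cI}\beta_m e_r(l\sqrt{jm})|$ by Cauchy--Schwarz in $l$, pulling out $\|\balpha\|_2$, and then expanding the square of the inner sum over the block. This produces a sum over quadruples $(m_1,m_2)\in\cI^2$ and a complete (or nearly complete) sum over $l\in[-L,L]$ of $e_r\bigl(l(\sqrt{jm_1}-\sqrt{jm_2})\bigr)$, where each $\sqrt{jm_i}$ ranges over all modular square roots. Summing the geometric series in $l$ gives essentially $\min\{L,\ r/\|r^{-1}(k_1-k_2)\|^{-1}\}$ type weights, i.e. the off-diagonal terms are governed by how often two modular square roots $k_1\equiv\sqrt{jm_1}$, $k_2\equiv\sqrt{jm_2}$ are close modulo $r$. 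Counting such near-coincidences is exactly an additive-energy-type problem for the set $\{\sqrt{jm}:m\in\cI\}$; the diagonal $m_1=m_2$ contributes $\ll LM\|\bbeta\|_\infty^2$ per block (accounting for multiplicities of square roots, which is $r^\varepsilon$ on average), and yields the first term $H^{-1/2}L^{1/2}M$ after summing over blocks and taking square roots. The remaining two terms $H^{1/4}L^{1/4}M$ and $H^{-1/4}L^{1/4}M^{3/4}r^{1/4}$ should emerge from the off-diagonal contribution: the energy bound for square roots on a block of length $H$ gives a main saving of shape (number of solutions) $\ll H^{2+\varepsilon}+H^{?}r^{?}$, and balancing the two resulting pieces against the block count produces precisely these two exponent combinations.

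The main obstacle I expect is the off-diagonal count: one needs a usable bound on the number of solutions of $k_1^2\equiv jm_1$, $k_2^2\equiv jm_2 \bmod r$ with $m_1,m_2$ in a short interval and $k_1-k_2$ in a prescribed short interval modulo $r$ — effectively a restricted additive energy of $\sqrt{j\cI}$ with an extra congruence/closeness constraint coming from the $l$-sum. For prime $r$ this is classical (it reduces to counting points on a conic or to Weil-type bounds), but here $r$ is an arbitrary natural number, so I would handle it by a multiplicativity/CRT reduction to prime-power moduli, lift square roots through Hensel's lemma for odd prime powers, treat the power of $2$ separately, and then invoke (a consequence of) the energy hypothesis or the unconditional energy bounds of Dunn--Kerr--Shparlinski--Shkredov--Zaharescu cited in the introduction. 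A secondary technical point is making the "replace $e(lf(m))$ by a constant" step fully rigorous via Abel summation with the correct error term, and keeping track of the harmless divisor-function factors $\tau(\cdot)$, $\omega(\cdot)$ that arise from counting square roots modulo $r$; these are all absorbed into $r^\varepsilon$. Finally, one optimizes the choice of $H$ within the allowed range \eqref{Hrange}; the three terms in \eqref{generalbilinearbound} are stated so that the user can pick $H$ afterward, so in the proof I would simply carry $H$ as a free parameter and verify each of the three contributions separately.
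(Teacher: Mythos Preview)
Your approach differs structurally from the paper's and, as written, does not recover the stated bound.

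There are two concrete problems. First, the partial--summation step to strip off $e(lf(m))$ on a block $\cI$ does not merge cleanly into $\alpha_l$: Abel summation replaces the block sum by a supremum over initial segments $\cI'\subset\cI$, and the maximising $\cI'$ depends on $l$. After Cauchy--Schwarz in $l$ you would need a uniform bound over all $O(H)$ sub-intervals, which either costs an unacceptable factor or forces you to expand the square with the twist $e(lf(m))$ still present. (If you do the latter, the twist is harmless anyway since $|f(m_1)-f(m_2)|\le FH\le L^{-1}$ and is absorbed when you localise $\sqrt{jm_1}-\sqrt{jm_2}\bmod r$; so the partial summation is both problematic and unnecessary.)

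Second, and more seriously, bounding each block separately and then summing over the $O(M/H)$ blocks loses a factor of $(M/H)^{1/4}$ in the off-diagonal. On a single block of length $H$ the relevant energy is $\ll (H^5/r+H^2)r^{\varepsilon}$, so the per-block $\ell^2$ estimate gives, after square root, a contribution $\ll L^{1/4}r^{1/4}H^{1/2}$ from the off-diagonal; summed over $M/H$ blocks this yields $H^{-1/2}L^{1/4}Mr^{1/4}$, which is larger than the claimed third term $H^{-1/4}L^{1/4}M^{3/4}r^{1/4}$ by exactly $(M/H)^{1/4}$. The paper avoids this loss by not splitting into blocks at all: after Cauchy--Schwarz in $l$ and smoothing the $l$-sum, it applies Weyl differencing (van der Corput) to the full $m$-sum over $[1,M]$, producing pairs $(m_1,m_2)$ with $1\le m_1,m_2\le M$ and $1\le|m_1-m_2|\le H$. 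Poisson summation in $l$ then converts the $l$-sum into the condition $\sqrt{jm_1}-\sqrt{jm_2}\equiv d\bmod r$ with $|d|\ll r^{1+\varepsilon}/L$, and a final Cauchy--Schwarz over $d$ brings in the restricted energy $E_2(r,j,M,H)$ of Theorem~\ref{energybounds}. The crucial gain is that $m_1,m_2$ still range over all of $[1,M]$ while only their difference is short, which is precisely what makes the energy bound $E_2\ll (H^3M^2/r+HM)r^{\varepsilon}$ sharp enough to give the stated exponents.
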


Taking $f\equiv 0$ and $H=\lfloor r^{1/2}M^{-1/2} \rfloor$ if $r^{1/3}\le M\le r$ in Theorem \ref{bilinearbound} implies the following.

\begin{corollary} \label{bilinearcor} Suppose that $r,j\in \mathbb{N}$, $(r,j)=1$, $1\le L\le r$ and  $r^{1/3}\le M\le r$. Then for any finite sequences $\boldsymbol{\alpha}=(\alpha_l)_{|l|\le L}$ and $\boldsymbol{\beta}=(\beta_m)_{1\le m\le M}$ of complex numbers, we have   
\begin{equation} \label{specialbilinearbound} 
\Sigma(r,j,L,M,\boldsymbol{\alpha},\boldsymbol{\beta})
\ll \left(L^{1/2}M^{5/4}r^{-1/4}+L^{1/4}M^{7/8}r^{1/8}\right)||\boldsymbol{\alpha}||_2||\boldsymbol{\beta}||_{\infty}r^{\varepsilon}.
\end{equation}
\end{corollary}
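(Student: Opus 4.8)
The plan is to obtain Corollary \ref{bilinearcor} as a direct specialization of Theorem \ref{bilinearbound}, the only choices to be made being the auxiliary function $f$ and the integer $H$. First I would take $f\equiv 0$. Then $f'(x)=0$ on $[1,M]$, so the hypothesis $|f'(x)|\le F$ holds for every $F>0$; hence $F$ may be fixed as small as we please, arranging simultaneously $F\le L^{-1}$ and $1/(LF)\ge M$, and then the constraint \eqref{Hrange} collapses to $1\le H\le M$. Moreover $e(lf(m))\equiv 1$, so $\Sigma(r,j,L,M,\boldsymbol{\alpha},\boldsymbol{\beta},f)=\Sigma(r,j,L,M,\boldsymbol{\alpha},\boldsymbol{\beta})$, the sum appearing in \eqref{specialbilinearbound}.

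Next I would set $H:=\lfloor r^{1/2}M^{-1/2}\rfloor$ and check admissibility. Since $M\le r$ we have $r^{1/2}M^{-1/2}\ge 1$, hence $H\ge 1$; since $M\ge r^{1/3}$ we have $r^{1/2}M^{-1/2}\le r^{1/3}\le M$, hence $H\le M$, so $H$ lies in the reduced range $1\le H\le M$. The same inequality $r^{1/2}M^{-1/2}\ge 1$ gives $\tfrac12 r^{1/2}M^{-1/2}\le H\le r^{1/2}M^{-1/2}$, i.e.\ $H\asymp r^{1/2}M^{-1/2}$, so the floor does not affect orders of magnitude.

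Finally, substituting $H\asymp r^{1/2}M^{-1/2}$ into the three terms of \eqref{generalbilinearbound} and computing the exponents, one finds $H^{-1/2}L^{1/2}M\asymp L^{1/2}M^{5/4}r^{-1/4}$, while both $H^{1/4}L^{1/4}M$ and $H^{-1/4}L^{1/4}M^{3/4}r^{1/4}$ are $\asymp L^{1/4}M^{7/8}r^{1/8}$; collecting these and absorbing the implied constants into the $\ll$-symbol and the $r^\varepsilon$ factor produces exactly \eqref{specialbilinearbound}. There is no genuine obstacle in this deduction: the only points needing care are verifying that the chosen $H$ satisfies \eqref{Hrange} and that $\lfloor r^{1/2}M^{-1/2}\rfloor\asymp r^{1/2}M^{-1/2}$, both immediate from the assumption $r^{1/3}\le M\le r$. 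All the substantive work is contained in Theorem \ref{bilinearbound} itself.
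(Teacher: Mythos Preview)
Your proposal is correct and follows exactly the paper's own deduction: take $f\equiv 0$ and $H=\lfloor r^{1/2}M^{-1/2}\rfloor$ in Theorem~\ref{bilinearbound}, with the range $r^{1/3}\le M\le r$ ensuring $1\le H\le M$. Your verification of the admissibility of $H$ and the resulting exponent computation are accurate and simply flesh out what the paper states in one line.
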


This is stronger than the trivial bound 
$$
\Sigma(r,j,L,M,\boldsymbol{\alpha},\boldsymbol{\beta})\ll L^{1/2}M||\boldsymbol{\alpha}||_2||\boldsymbol{\beta}||_{\infty}
$$
if 
$$
r^{1/3}\le M\le r^{1-4\varepsilon} \quad \mbox{and} \quad L^2M\ge r^{1+8\varepsilon}.
$$
In particular, we obtain a nontrivial bound if $r^{1/3+3\varepsilon}\le L,M\le r^{1-\varepsilon}$. We do not get a non-trivial bound along these lines if both $L$ and $M$ are less than or equal to $r^{1/3}$.

To handle the above bilinear sums, we extend bounds on additive energies of modular square roots derived in \cite{KSSZ}. The authors of \cite{DKSZ} and  \cite{KSSZ} considered energies of the form
$$
T_{2}(r,j,M):=\sum\limits_{\substack{1\le m_1,m_2,m_3,m_4\le M\\ \sqrt{jm_2}-\sqrt{jm_3}\equiv \sqrt{jm_3}-\sqrt{jm_4}\bmod{r}}} 1
$$ 
and 
$$
T_{4}(r,j,M):=\sum\limits_{\substack{1\le m_1,...,m_8\le M\\ \sqrt{jm_1}-\sqrt{jm_2}+\sqrt{jm_3}-\sqrt{jm_4}\equiv \sqrt{jm_5}-\sqrt{jm_6}+\sqrt{jm_7}-\sqrt{jm_8}\bmod{r}}} 1
$$ 
for the situation when $r$ is a prime, $(r,j)=1$ and $M\ge 1$. They proved the following result. 

\begin{theorem} \label{ksszbounds} Suppose that $r\in \mathbb{P}$, $j\in \mathbb{N}$, $(r,j)=1$ and $1\le M\le r$. Then 
\begin{equation} \label{T2bound}
T_2(r,j,M)\ll \left(\frac{M^{7/2}}{r^{1/2}}+M^2\right)r^{\varepsilon}
\end{equation}
and 
\begin{equation} \label{T4bound}
T_4(r,j,M)\ll M^4\left(\frac{M^{7/2}}{r^{1/2}}+M^2\right)r^{\varepsilon}.
\end{equation}
\end{theorem}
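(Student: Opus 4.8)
The plan is to deduce both bounds from estimates for the additive energies of the set of modular square roots, and then to read off the bound for $T_4$ from that for $T_2$. Set $I:=\{1,\dots,M\}$ and
$$
\cA=\cA(r,j,M):=\{x\bmod r:\ x^2\bmod r\in jI\}.
$$
Since $(r,j)=1$, each $x\in\cA$ determines a unique $m=m(x)\in I$ with $x^2\equiv jm\pmod r$, the square roots of $jm$ are precisely $\pm x$, and $\cA=-\cA$; in particular $|\cA|\le 2M$. Replacing each $m_i$ by one of its modular square roots $k_i$ (so $k_i^2\equiv jm_i$) turns the congruence in $T_2$ into $k_1-k_2\equiv k_3-k_4\pmod r$, and the one in $T_4$ into $k_1-k_2+k_3-k_4\equiv k_5-k_6+k_7-k_8\pmod r$, with all $k_i\in\cA$; using $\cA=-\cA$ to absorb the signs, we obtain, up to a bounded factor and a negligible boundary term from $r\mid jm$,
$$
T_2(r,j,M)\ll E_2(\cA),\qquad T_4(r,j,M)\ll E_4(\cA),
$$
where $E_k(\cA):=\#\{(x_1,\dots,x_{2k})\in\cA^{2k}:\ x_1+\dots+x_k=x_{k+1}+\dots+x_{2k}\}$. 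By orthogonality $E_k(\cA)=\tfrac1r\sum_{t\bmod r}S(t)^{2k}$, where $S(t):=\sum_{x\in\cA}e_r(tx)$ is real-valued because $\cA=-\cA$. It thus suffices to prove
$$
E_2(\cA)\ll\Big(\frac{M^{7/2}}{r^{1/2}}+M^2\Big)r^{\varepsilon}\quad\text{and}\quad E_4(\cA)\ll M^4\Big(\frac{M^{7/2}}{r^{1/2}}+M^2\Big)r^{\varepsilon}.
$$

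First I would record two estimates for $S(t)$. Opening the condition $x^2\equiv jm$ by additive characters and completing the (short) sum over $m\le M$ expresses $S(t)$, for $t\ne 0$, as a combination of complete quadratic Gauss (equivalently Salié) sums; invoking the Weil bound and the elementary estimate $\sum_{1\le c<r}\min\{M,\|c/r\|^{-1}\}\ll r^{1+\varepsilon}$ gives $|S(t)|\ll r^{1/2+\varepsilon}$. Together with the trivial bound $|S(t)|\le|\cA|\le 2M$ this yields $|S(t)|\ll\min\{M,r^{1/2+\varepsilon}\}$ for $t\ne 0$, while $S(0)=|\cA|\le 2M$ and $\sum_{t\bmod r}S(t)^2=r|\cA|$ by Parseval.

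For $E_2(\cA)$ I would split off the $t=0$ term, contributing $\tfrac1r|\cA|^4\ll M^4/r\le M^{7/2}/r^{1/2}$ (as $M\le r$). For $t\ne 0$, when $M$ is large one uses
$$
\frac1r\sum_{t\ne 0}S(t)^4\ \le\ \Big(\max_{t\ne 0}|S(t)|^2\Big)\cdot\frac1r\sum_{t\bmod r}S(t)^2\ \ll\ r^{1+\varepsilon}|\cA|\ \ll\ r^{1+\varepsilon}M,
$$
which is $\ll(M^{7/2}/r^{1/2})r^{\varepsilon}$ as soon as $M\ge r^{3/5}$. When $M<r^{3/5}$ this is too weak, and I would count the additive quadruples directly. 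Eliminating a variable, $E_2(\cA)=\sum_{h\bmod r}r_{\cA-\cA}(h)^2$ with $r_{\cA-\cA}(h):=\#\{(x,x')\in\cA^2:\ x-x'=h\}$: the term $h=0$ gives $|\cA|^2\ll M^2$, and for $h\ne 0$ one checks that $r_{\cA-\cA}(h)$ is the number of integer points in a box of side $\asymp M$ on the conic
$$
j^2(X-Y)^2-2h^2j(X+Y)+h^4\equiv 0\pmod r.
$$
Since $\sum_{h\ne 0}r_{\cA-\cA}(h)=|\cA|^2-|\cA|\ll M^2$, it remains to control how often $r_{\cA-\cA}(h)$ is large; for this one parametrises the conic, completes the character sums coming from the two interval conditions cutting out the box (obtaining $r_{\cA-\cA}(h)\ll M^2/r+r^{1/2+\varepsilon}$ for every $h\ne0$), and—crucially—observes that if $r_{\cA-\cA}(h)\ge 3$ then two suitably chosen box points yield a linear relation forcing $h^2\bmod r$ into an explicit set of polynomially bounded size, once the degenerate configurations arising from pairs $\{x,-x\}$ (already absorbed into the $h=0$ term) have been separated off. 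Balancing these inputs gives \eqref{T2bound}.

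For $E_4(\cA)$ no further work is needed:
$$
E_4(\cA)=\frac1r\sum_{t\bmod r}S(t)^8\ \le\ \Big(\max_{t\bmod r}|S(t)|^4\Big)\cdot\frac1r\sum_{t\bmod r}S(t)^4\ =\ \Big(\max_{t\bmod r}|S(t)|^4\Big)\,E_2(\cA)\ \le\ |\cA|^4\,E_2(\cA),
$$
so the trivial bound $|S(t)|\le|\cA|\le 2M$ and \eqref{T2bound} immediately give \eqref{T4bound}. The main obstacle is the small-$M$ range of \eqref{T2bound}: the clean Fourier-plus-Weil estimate genuinely loses there, so one must exploit the algebraic structure of $\cA$—equivalently, the geometry of the conics above—to show that $\cA$ is almost a Sidon set, and the careful separation and bookkeeping of the degenerate configurations in that point count is the delicate part of the argument.
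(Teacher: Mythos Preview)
The paper does not prove this theorem; it simply cites \cite[Theorem~1.1 and (1.5)]{KSSZ}. However, the paper reproduces the KSSZ method in full when proving the more general Theorem~\ref{energybounds} (section~2.2), so that is the natural point of comparison. Your deduction of \eqref{T4bound} from \eqref{T2bound} via $E_4(\cA)\le |\cA|^4 E_2(\cA)$ is correct and is exactly what the paper does in subsection~\ref{E4estimation}. Your treatment of the range $M\ge r^{3/5}$ via the Weil bound for $S(t)$ and Parseval is also correct.

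The genuine gap is the range $M<r^{3/5}$. Your two inputs there are (i) the pointwise bound $r_{\cA-\cA}(h)\ll M^2/r+r^{1/2+\varepsilon}$ and (ii) the claim that $r_{\cA-\cA}(h)\ge 3$ forces $h^2\bmod r$ into a set of ``polynomially bounded size''. Input (i) alone, combined with $\sum_h r_{\cA-\cA}(h)\ll M^2$, yields only $E_2(\cA)\ll M^4/r+r^{1/2+\varepsilon}M^2$, which misses the target $M^{7/2}/r^{1/2}+M^2$ throughout $M<r^{2/3}$. So everything rests on (ii), but you do not prove it, you do not specify what ``polynomially bounded'' means (bounded in terms of $M$? of $r$?), and you do not carry out the balancing that would turn (ii) into \eqref{T2bound}. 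As written this is a hope, not an argument; you yourself flag it as ``the delicate part''.

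The KSSZ argument (mirrored in section~2.2 here) supplies precisely what is missing. After the same algebraic reduction to your conic, one interprets $r_{\cA-\cA}(d)$ as a count of points of the lattice $\cL(d)=\{(x,y):x\equiv kd^2 y\bmod r\}$ in a box, and partitions the $d$'s according to the successive minima $\lambda_1(d),\lambda_2(d)$. The case $\lambda_1>1$ forces $r_{\cA-\cA}(d)\le 1$ (this is the rigorous version of your ``almost Sidon'' heuristic); the case $\lambda_1\le 1<\lambda_2$ is handled by observing that a fixed shortest vector $(a,b)$ determines $d$ up to $O(r^\varepsilon)$ choices and then counting over $(a,b)$; the case $\lambda_2\le 1$ uses Minkowski's second theorem to bound the lattice point count directly. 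This trichotomy is the substantive idea that your sketch gestures at but does not provide.
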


\begin{proof} This can be found in \cite[Theorem 1.1 and (1.5)]{KSSZ}. \end{proof}

Here we consider restricted energies of the form 
\begin{equation} \label{E2def}
E_2(r,j,M,H):=\sum\limits_{\substack{1\le m_1,m_2,m_3,m_4\le M\\ 1\le |m_i-m_{i+1}|\le H \text{ for } i=1,3\\ \sqrt{jm_1}-\sqrt{jm_2}\equiv \sqrt{jm_3}-\sqrt{jm_4}\bmod{r}}} 1
\end{equation}
and 
\begin{equation} \label{E4def}
E_4(r,j,M,H):=\sum\limits_{\substack{1\le m_1,...,m_8\le M\\ 1\le |m_{i}-m_{i+1}|\le H \text{ for } i=1,3,5,7\\ \sqrt{jm_1}-\sqrt{jm_2}+\sqrt{jm_3}-\sqrt{jm_4}\equiv \sqrt{jm_5}-\sqrt{jm_6}+\sqrt{jm_7}-\sqrt{jm_8}\bmod{r}}} 1
\end{equation}
for the more general situation when $r$ is an arbitrary natural number, $(r,j)=1$ and $1\le H\le M\le r$. Following the treatment in \cite{KSSZ}, we prove the following.

\begin{theorem} \label{energybounds} Suppose that $r,j\in \mathbb{N}$, $(r,j)=1$ and $1\le H\le M\le r$. Then 
\begin{equation} \label{E2bound}
E_2(r,j,M,H)\ll \left(\frac{H^3M^2}{r}+HM\right)r^{\varepsilon}
\end{equation}
and 
\begin{equation} \label{E4bound}
E_4(r,j,M,H)\ll H^2M^2\left(\frac{H^3M^2}{r}+HM\right)r^{\varepsilon}.
\end{equation}
\end{theorem}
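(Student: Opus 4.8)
\emph{Reduction of $E_{4}$ to $E_{2}$.} Since $x^{2}\equiv c\bmod r$ has $\ll r^{\varepsilon}$ solutions uniformly in $c$, it costs only a factor $r^{\varepsilon}$ to replace $E_{2},E_{4}$ by the variants $\widetilde E_{2},\widetilde E_{4}$ in which one also records the chosen modular square roots $x_{i}$ of $jm_{i}$; thus it is enough to prove \eqref{E2bound} and \eqref{E4bound} for $\widetilde E_{2},\widetilde E_{4}$. For $\lambda\in\Z/r\Z$ put
\[
  r_{2}(\lambda):=\#\bigl\{(m_{1},m_{2},x_{1},x_{2})\ :\ x_{i}^{2}\equiv jm_{i}\bmod r,\ 1\le m_{i}\le M,\ 1\le|m_{1}-m_{2}|\le H,\ x_{1}-x_{2}\equiv\lambda\bmod r\bigr\}.
\]
Then $\widetilde E_{2}=\|r_{2}\|_{2}^{2}$ and, by the same decomposition, $\widetilde E_{4}=\|r_{2}*r_{2}\|_{2}^{2}$, where $*$ is convolution on $\Z/r\Z$. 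By Young's inequality $\|r_{2}*r_{2}\|_{2}\le\|r_{2}\|_{1}\|r_{2}\|_{2}$, and trivially $\|r_{2}\|_{1}\ll HM\,r^{\varepsilon}$, so $\widetilde E_{4}\le\|r_{2}\|_{1}^{2}\widetilde E_{2}\ll H^{2}M^{2}r^{\varepsilon}\,\widetilde E_{2}$; this reduces \eqref{E4bound} to \eqref{E2bound}, and it remains to bound $\widetilde E_{2}=\|r_{2}\|_{2}^{2}$.

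\emph{Reduction of $\widetilde E_{2}$ to a point count.} Here I would follow the treatment of $T_{2}$ in \cite{KSSZ}, tracking the restriction and allowing $r$ composite. As $x_{1}\equiv x_{2}\bmod r$ with $x_{i}^{2}\equiv jm_{i}$ and $m_{i}\in[1,r]$ forces $m_{1}=m_{2}$, the condition $|m_{1}-m_{2}|\ge1$ gives $x_{1}\ne x_{2}$. Write $m_{1}=m_{2}+h_{1}$ and $m_{3}=m_{4}+h_{2}$ with $1\le|h_{i}|\le H$, and set $\delta:=x_{1}-x_{2}=x_{3}-x_{4}$. The factorisation $x_{i}^{2}-x_{i+1}^{2}=(x_{i}-x_{i+1})(x_{i}+x_{i+1})$ gives $\delta(x_{1}+x_{2})\equiv jh_{1}$ and $\delta(x_{3}+x_{4})\equiv jh_{2}\bmod r$. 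On the locus where $\delta$ is a unit these determine $x_{1}+x_{2},x_{3}+x_{4}$, hence $(m_{1},m_{2},m_{3},m_{4})$; writing $u\equiv j^{-1}\delta^{2}\bmod r$ (each admissible $u$ coming from $\ll r^{\varepsilon}$ values of $\delta$) and using that, for fixed $h_{i}$, the pair $m_{i},m_{i+1}$ lies in $[1,M]$ exactly when $m_{i}+m_{i+1}$ runs over an arithmetic progression of $\ll M$ integers, the four conditions $m_{i}\in[1,M]$ become the two conditions
\[
  u+h_{1}^{2}u^{-1}\in\mathcal{K}(h_{1}),\qquad u+h_{2}^{2}u^{-1}\in\mathcal{K}(h_{2}),
\]
where each $\mathcal{K}(h)$ is an arithmetic progression of $\ll M$ residues modulo $r$. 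Hence
\[
  \widetilde E_{2}\ \ll\ r^{\varepsilon}\,\#\bigl\{(u,h_{1},h_{2})\ :\ u\in(\Z/r\Z)^{\times},\ 1\le|h_{i}|\le H,\ u+h_{i}^{2}u^{-1}\in\mathcal{K}(h_{i})\ (i=1,2)\bigr\}+\mathcal{R},
\]
where $\mathcal{R}$ accounts for the non-unit locus $(u,r)>1$; this part I would handle separately by stratifying according to $(\delta,r)$ and again using the $\ll r^{\varepsilon}$ bound for the number of roots of the equations $x^{2}\equiv c$ and of the quadratic congruences occurring below.

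\emph{Estimating the count.} When $h_{1}=\pm h_{2}$ the two conditions coincide; for each value of $u+h^{2}u^{-1}$ the unit $u$ satisfies a quadratic congruence modulo $r$ with $\ll r^{\varepsilon}$ roots, so this ``diagonal'' contributes $\ll\sum_{1\le|h|\le H}M\,r^{\varepsilon}\ll HM\,r^{\varepsilon}$, i.e.\ the second term of \eqref{E2bound}. When $h_{1}\ne\pm h_{2}$ I would detect the two progression conditions by additive characters modulo $r$ and carry out the sum over $u$; this yields complete exponential sums of Kloosterman and Gauss/Salié type, of the shape $\sum_{u\in(\Z/r\Z)^{\times}}e_{r}\bigl((a+b)u+(ah_{1}^{2}+bh_{2}^{2})u^{-1}\bigr)$, which are estimated by Weil's bound with the relevant $\gcd$-factor (for prime-power, hence composite, moduli one invokes the classical analogues via the Chinese Remainder Theorem, costing only $r^{\varepsilon}$-factors plus some ramified ranges that are absorbed). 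The zero-frequency term gives a main contribution $\ll H^{2}M^{2}r^{-1+\varepsilon}$, the non-zero frequencies and the degenerate ranges are controlled by elementary divisor sums, and combining all of this with the trivial bound $\widetilde E_{2}\ll H^{2}M\,r^{\varepsilon}$ (which already settles the range $r\le HM$) yields \eqref{E2bound}.

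\emph{Main obstacle.} The crux is obtaining square-root cancellation in these complete exponential sums \emph{uniformly over a composite modulus} $r$: modulo a prime this is a classical Weil/Salié evaluation, but modulo prime powers one must deal with the stationary (ramified) ranges and the $(\cdot,r)$-factors carefully, while keeping every count of modular square roots and of roots of quadratic congruences down to $r^{\varepsilon}$. A secondary difficulty is organising the change of variables so that the restriction $|m_{i}-m_{i+1}|\le H$ is genuinely exploited — in particular, controlling configurations in which $x_{1}\equiv x_{2}$ modulo a large divisor of $r$ — so that one recovers the savings recorded in $H^{3}M^{2}/r+HM$ rather than a weaker unrestricted estimate.
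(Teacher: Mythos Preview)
Your reduction of $E_4$ to $E_2$ via Young's inequality is correct and matches the paper's argument (which phrases the same thing on the Fourier side: pull out $\sup_a|S(a)|^2\ll (HMr^{\varepsilon})^2$ from $r^{-1}\sum_a|S(a)|^4$). Your algebraic reduction of $\widetilde E_2$ to the count of triples $(u,h_1,h_2)$ with $u+h_i^2u^{-1}\in\cK(h_i)$ is also essentially the same change of variables the paper performs.

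The genuine gap is in the final step. Detecting the two progressions by additive characters and summing over $u$ does indeed produce Kloosterman sums $K(a+b,\,ah_1^2+bh_2^2;\,r)$, but the Weil/Esterman bound only gives, for each fixed $(h_1,h_2)$,
\[
N(h_1,h_2)\ \ll\ \frac{M^2}{r}+r^{1/2+\varepsilon},
\]
since $\sum_{a\bmod r}\min(M,\lVert a/r\rVert^{-1})\asymp r\log r$. Summing over $|h_1|,|h_2|\le H$ yields an error term $H^2r^{1/2+\varepsilon}$, and there is no averaging over the short $h_i$-ranges that recovers this loss. This term is \emph{not} absorbed by $H^3M^2/r+HM$: for instance at $H=M=r^{1/3}$ one gets $H^2r^{1/2}=r^{7/6}$, whereas the target bound is $r^{2/3}$. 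The trivial bound $\widetilde E_2\ll H^2M$ you invoke only helps when $r\le HM$, so the range $HM<r$ and $H>r^{1/4}$ (which includes the regime needed for the applications) is not covered. In short, completion plus square-root cancellation is too crude here.

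The paper avoids this by \emph{not} completing. After the same change of variables it fixes $d$ (your $u$ corresponds to $kd^2$) and observes that the solutions $(h,m)$ to $h^2\equiv kd^2m-k^2d^4\bmod{\tilde r}$ map into the lattice $\cL(\tilde r;d)=\{(x,y):x\equiv kd^2y\bmod{\tilde r}\}$ intersected with the box $[-\tilde H^2,\tilde H^2]\times[-4M,4M]$. It then splits the sum over $d$ according to the successive minima $\lambda_1,\lambda_2$ of this lattice with respect to the box and treats the three ranges $\lambda_1>1$, $\lambda_1\le1<\lambda_2$, $\lambda_2\le1$ separately, using Minkowski's second theorem and the Betke--Henk--Wills lattice-point bound. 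This geometry-of-numbers input is what produces the saving $H^3M^2/r+HM$ and is the essential idea missing from your proposal.

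A secondary issue: the assertion that $x^2\equiv c\bmod r$ has $\ll r^{\varepsilon}$ solutions ``uniformly in $c$'' is false for composite $r$ (take $c\equiv 0$ modulo a large square divisor of $r$); the correct bound is $s(r;c)\ll r^{\varepsilon}\sqrt{(r,c)}$, and one needs an averaging argument (as in the paper's Lemma on $\sum_m(r,jm)$) rather than a uniform pointwise bound.
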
 

We note that the lower bound $|m_i-m_{i+1}|\ge 1$ for $i$ odd in \eqref{E2def} and \eqref{E4def} is essential. If the case $m_i=m_{i+1}$ for $i$ odd were included, we would trivially have $E_2\gg M^2$. As a consequence, the term $HM$ in \eqref{E2bound} and \eqref{E4bound} would need to be replaced by $M^2$. (In subsection \ref{E4estimation}, we will prove that $E_4(r,j,M,H)\ll \left(r^{\varepsilon}HM\right)^2 E_2(r,j,M,H)$.)
  
In the case when $r$ is a prime and $H=M$, the above bounds \eqref{E2bound} and \eqref{E4bound} follow from Theorem \ref{ksszbounds}. 

In \cite{SSZ} it was proved that for, in a sense, almost all primes $r$, all $j$ coprime to $r$ and all $M$ in the range $1\le M\le r$, the bound
\begin{equation} \label{expectT2}
T_2(r,j,M)\ll \left(\frac{M^{4}}{r}+M^2\right)r^{\varepsilon}
\end{equation}
holds. Following heuristic arguments in \cite{BaiLS}, this bound should give the true order of magnitude of $T_2(r,j,M)$ even if $r$ is not restricted to primes. Similarly, it should be expected that 
\begin{equation} \label{expectT4}
T_4(r,j,M)\ll \left(\frac{M^{8}}{r}+M^4\right)r^{\varepsilon}
\end{equation}
holds in general. Using similar arguments, we may expect the true orders of magnitudes of the energies $E_2$ and $E_4$ to be
\begin{equation} \label{expectE2} 
E_2(r,j,M,H)\ll \left(\frac{H^2M^{2}}{r}+HM\right)r^{\varepsilon}
\end{equation}
and 
\begin{equation} \label{expectE4} 
E_4(r,j,M,H)\ll \left(\frac{H^4M^{4}}{r}+H^2M^2\right)r^{\varepsilon}.
\end{equation}
Here the terms $H^2M^2/r$ and $H^4M^4/r$ come from the probability $1/r$ of the congruences in \eqref{E2def} and \eqref{E4def} to hold for randomly picked $m_i$, and the terms $HM$ and $H^2M^2$ majorize the diagonal contributions coming from the trivial congruences 
$$
\sqrt{jm_1}-\sqrt{jm_2}\equiv \sqrt{jm_1}-\sqrt{jm_2}\bmod{r}
$$
and
$$
\sqrt{jm_1}-\sqrt{jm_2}+\sqrt{jm_3}-\sqrt{jm_4}\equiv \sqrt{jm_1}-\sqrt{jm_2}+\sqrt{jm_3}-\sqrt{jm_4}\bmod{r},
$$
respectively. We mention that \eqref{expectE2} would give a stronger bound of 
\begin{equation*} \label{gen1}
\begin{split}
&  \sum\limits_{|l|\le L} \sum\limits_{1\le m\le M} \alpha_l\beta_m e_r\left(l\sqrt{jm}\right)e(lf(m))\\ \ll & 
\left(H^{-1/2}L^{1/2}M+ L^{1/4}M+H^{-1/4}L^{1/4}M^{3/4}r^{1/4}\right)||\boldsymbol{\alpha}||_2||\boldsymbol{\beta}||_{\infty} r^{\varepsilon}
\end{split}
\end{equation*}
in place of \eqref{generalbilinearbound}. Taking $f\equiv 0$ and $H=M$, this implies a stronger bound of 
\begin{equation*} \label{spec1}
\begin{split}
& \sum\limits_{|l|\le L} \sum\limits_{1\le m\le M} \alpha_l\beta_m e_r\left(l\sqrt{jm}\right)\\
\ll & 
\left(L^{1/2}M^{1/2}+ L^{1/4}M+L^{1/4}M^{1/2}r^{1/4}\right)||\boldsymbol{\alpha}||_2||\boldsymbol{\beta}||_{\infty} r^{\varepsilon}
\end{split}
\end{equation*}
in place of \eqref{specialbilinearbound}, which is non-trivial if 
\begin{equation*} \label{nontrivcond2}
M,L\ge r^{4\varepsilon} \quad \mbox{and} \quad M^2L\ge r^{1+4\varepsilon}.
\end{equation*}
Consequently, even under the hypothetical bound \eqref{expectE2} we still do not get a non-trivial bound if both $M$ and $L$ are less than or equal to $r^{1/3}$.
 
In \cite[Theorem 1.2]{KSSZ}, the authors sharpened the above unconditional bound \eqref{T4bound} for $T_4(r,j,M)$ if $M\le r^{1/16-\varepsilon}$, saving an extra factor of $M^{\nu}$ for some $\nu>0$. Considering the expected bound \eqref{expectT4}, an improvement of \eqref{T4bound} should be possible in a much wider range of $M$. Similarly, one may conjecture that the bound \eqref{E4bound} for $E_4(r,j,M,H)$ can be improved. 
However, it seems hard to establish the expected bounds \eqref{expectE2} and \eqref{expectE4} in full generality. Here we propose the following weaker hypothesis on $E_4(r,j,M,H)$ which is consistent with \eqref{expectE4} and saves an extra factor of $H^{\nu}$ beyond the unconditional bound \eqref{E4bound}. 

\begin{Hypothesis} \label{hyp} There is $\nu>0$ such that 
\begin{equation} \label{E4conj}
E_4(r,j,M,H)\ll H^{2-\nu}M^2\left(\frac{H^3M^2}{r}+HM\right)r^{\varepsilon}
\end{equation}
whenever $r,j\in \mathbb{N}$, $(r,j)=1$ and $1\le H\le M\le r$.
\end{Hypothesis}

Under this hypothesis, we will establish the following result on our bilinear exponential sums.

\begin{theorem} \label{bilinearbound2} Suppose $\nu$ is a positive real number such that \eqref{E4conj} holds whenever $r,j\in \mathbb{N}$, $(r,j)=1$ and $1\le H\le M\le r$. Then, under the conditions in Theorem \ref{bilinearbound}, we have 
\begin{equation}
\begin{split}
\label{Hypobound}
&
\Sigma(r,j,L,M,\boldsymbol{\alpha},\boldsymbol{\beta},f)\\ 
\ll & \left(
H^{-1/2}L^{1/2}M+H^{1/8-\nu/8}L^{3/8}M+H^{-1/8-\nu/8}L^{3/8}M^{7/8}r^{1/8}\right)
|| \boldsymbol{\alpha}||_2||\boldsymbol{\beta}||_{\infty}r^{\varepsilon}.
\end{split}
\end{equation}
\end{theorem}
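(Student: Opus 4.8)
The plan is to run the argument proving Theorem~\ref{bilinearbound} essentially verbatim, changing only the single step in which the restricted energy is estimated. Recall its shape. Writing $S_l:=\sum_{1\le m\le M}\beta_m e_r\bigl(l\sqrt{jm}\bigr)e(lf(m))$, so that $\Sigma(r,j,L,M,\boldsymbol{\alpha},\boldsymbol{\beta},f)=\sum_{|l|\le L}\alpha_l S_l$, Cauchy--Schwarz in $l$ gives $|\Sigma|^2\le\|\boldsymbol{\alpha}\|_2^2\sum_{|l|\le L}|S_l|^2$. Then one averages over the short translations $m\mapsto m-h$ with $1\le h\le H$: using $H\le M$, reindexing inside $S_l$, applying Cauchy--Schwarz in the new outer variable and opening the square, one gets
\[
\sum_{|l|\le L}|S_l|^2\ \ll\ \frac{M}{H}\,\|\boldsymbol{\beta}\|_\infty^2\sum_{\substack{n_1,n_2\\ |n_1-n_2|\le H}}\Bigl|\,\sum_{|l|\le L}e_r\bigl(l(\sqrt{jn_1}-\sqrt{jn_2})\bigr)\,e\bigl(l(f(n_1)-f(n_2))\bigr)\Bigr|.
\]
Here the constraint \eqref{Hrange} enters precisely to force $|f(n_1)-f(n_2)|\le FH\le L^{-1}$, so that on completing the sum over $l$ the analytic phase is a bounded perturbation and the inner sum is $\ll\sum_{k_1^2\equiv jn_1}\sum_{k_2^2\equiv jn_2}\min\bigl(L,\|(k_1-k_2)/r\|^{-1}\bigr)$, the $k_i$ running $\bmod r$. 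The diagonal $n_1=n_2$ forces $k_1\equiv k_2\bmod r$ (as $(j,r)=1$ and $1\le|n_1-n_2|\le H\le r$), and contributes $\ll\|\boldsymbol{\alpha}\|_2\|\boldsymbol{\beta}\|_\infty H^{-1/2}L^{1/2}M\,r^{\varepsilon}$, the first term of \eqref{Hypobound}, which is insensitive to the hypothesis.

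For the off-diagonal range $1\le|n_1-n_2|\le H$ (where $k_1\not\equiv k_2\bmod r$, so $\|(k_1-k_2)/r\|\ge 1/r$) I would decompose dyadically in $\Delta\asymp\|(k_1-k_2)/r\|$ and reduce to estimating
\[
N(\Delta):=\#\bigl\{(n_1,n_2,k_1,k_2):\ 1\le|n_1-n_2|\le H,\ k_i^2\equiv jn_i\bmod r,\ \|(k_1-k_2)/r\|\le\Delta\bigr\}.
\]
This is the one place the proofs of Theorems~\ref{bilinearbound} and~\ref{bilinearbound2} part ways. In the former one writes $k_1-k_2\in I$ for an interval $I\subset\Z/r\Z$ of length $\ll\Delta r$, so $N(\Delta)=\sum_{t\in I}R(t)$ with $R(t):=\#\{(n_1,n_2,k_1,k_2):1\le|n_1-n_2|\le H,\ k_i^2\equiv jn_i,\ k_1-k_2\equiv t\bmod r\}$, and one Cauchy--Schwarz gives $N(\Delta)\le|I|^{1/2}\bigl(\sum_tR(t)^2\bigr)^{1/2}\ll(\Delta r)^{1/2}E_2(r,j,M,H)^{1/2}$. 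Instead, I would iterate once more at the level of the convolution $R\ast R$: since all summands are non-negative, $\sum_{u\in I+I}(R\ast R)(u)\ge\bigl(\sum_{t\in I}R(t)\bigr)^2=N(\Delta)^2$, while $\sum_{u}(R\ast R)(u)^2\ll r^{\varepsilon}E_4(r,j,M,H)$ by \eqref{E4def}; combined with $|I+I|\ll\Delta r$, this yields
\[
N(\Delta)^2\ \le\ |I+I|^{1/2}\Bigl(\sum_u(R\ast R)(u)^2\Bigr)^{1/2}\ \ll\ (\Delta r)^{1/2}\,E_4(r,j,M,H)^{1/2}\,r^{\varepsilon},
\]
that is, $N(\Delta)\ll(\Delta r)^{1/4}E_4(r,j,M,H)^{1/4}r^{\varepsilon}$. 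Now I invoke the hypothesis: by \eqref{E4conj}, $N(\Delta)\ll(\Delta r)^{1/4}\bigl(H^{2-\nu}M^2(H^3M^2/r+HM)\bigr)^{1/4}r^{\varepsilon}$.

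The rest is routine bookkeeping. Feeding this into the dyadic sum over $\Delta\in[1/L,1/2]$ (a geometric series, summable because of the exponent $1/4$, and dominated by $\Delta\asymp1/L$), then carrying the outcome back through the factor $M/H$ and the outer Cauchy--Schwarz in $l$, the off-diagonal contribution to $\Sigma$ is
\[
\ll\ \|\boldsymbol{\alpha}\|_2\|\boldsymbol{\beta}\|_\infty\,M^{1/2}H^{-1/2}L^{3/8}r^{1/8}\bigl(H^{2-\nu}M^2(H^3M^2/r+HM)\bigr)^{1/8}r^{\varepsilon},
\]
and distributing $(H^3M^2/r+HM)^{1/8}$ produces precisely the terms $H^{1/8-\nu/8}L^{3/8}M$ and $H^{-1/8-\nu/8}L^{3/8}M^{7/8}r^{1/8}$ of \eqref{Hypobound}; together with the diagonal term this is the asserted bound. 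The two points needing real care are the ones flagged above: that the analytic factor $e(l(f(n_1)-f(n_2)))$ genuinely is absorbed into the $\min$ --- which is exactly why the range \eqref{Hrange} is imposed --- and that the passage from $N(\Delta)$ to $E_4$ via $R\ast R$ loses no more than $r^{\varepsilon}$, which hinges on the $\ll r^{\varepsilon}$ modular square roots of each $jn_i$ and the elementary bound $|I+I|\ll\Delta r$. Everything else is identical to the proof of Theorem~\ref{bilinearbound}, and I expect the $R\ast R$ step to be the only substantive new ingredient.
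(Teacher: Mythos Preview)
Your argument is correct and reaches the stated bound, but it follows a genuinely different path from the paper's proof. The paper does \emph{not} stay with $\sum_l|S_l|^2$ and upgrade the Cauchy--Schwarz step to reach $E_4$; instead it opens with H\"older in the form $|\Sigma|^4\ll\|\boldsymbol{\alpha}\|_{4/3}^4\sum_{|l|\le L}|S_l|^4$, squares the Weyl-differenced bound for $|S_l|^2$ to produce a quadruple sum in $m_1,\dots,m_4$, applies Poisson in $l$ to obtain a sum over $|d|\ll r/L$ of counts of quadruples, and then a single Cauchy--Schwarz over $d$ yields $(r/L)^{1/2}E_4^{1/2}$. Squaring and using $\|\boldsymbol{\alpha}\|_{4/3}^4\ll L\|\boldsymbol{\alpha}\|_2^4$ gives exactly the same eighth-power bound $|\Sigma|^8\ll\|\boldsymbol{\alpha}\|_2^8\|\boldsymbol{\beta}\|_\infty^8\bigl(L^4M^8H^{-4}+rL^3M^4H^{-4}E_4\bigr)r^\varepsilon$ that your route produces. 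Your convolution step $N(\Delta)^2\le\sum_{u\in I+I}(R\ast R)(u)\le|I+I|^{1/2}E_4^{1/2}$ is the moral equivalent of the paper's H\"older-plus-squaring, and is arguably cleaner: it keeps the second-moment framework of Theorem~\ref{bilinearbound} intact and isolates the passage from $E_2$ to $E_4$ in a single transparent inequality. The paper's route, on the other hand, makes the parallel with the proof of Theorem~\ref{bilinearbound} more visible at the level of Poisson/$d$-counting and avoids the dyadic decomposition in $\Delta$.

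One small correction: your parenthetical ``the diagonal $n_1=n_2$ forces $k_1\equiv k_2\bmod r$'' is backwards --- $n_1=n_2$ only gives $k_1^2\equiv k_2^2$, which allows $k_1\equiv -k_2$ and more. What is true (and what you presumably meant) is the converse: $k_1\equiv k_2$ forces $n_1=n_2$ since $(j,r)=1$ and $|n_1-n_2|\le H\le r$. The extra diagonal terms with $n_1=n_2$ but $k_1\not\equiv k_2$ are harmless, since $\sum_n s(r;jn)^2\ll r^\varepsilon M$ absorbs them into the $H^{-1/2}L^{1/2}M$ term; just adjust the wording.
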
  

Taking $f\equiv 0$ and $H=M$ in Theorem \ref{bilinearbound2} implies the following.

\begin{corollary} \label{bilinearcor2} Suppose $\nu$ is a positive real number such that \eqref{E4conj} holds whenever $r,j\in \mathbb{N}$, $(r,j)=1$ and $1\le H= M\le r$. Then, under the conditions in Corollary \ref{bilinearcor}, we have 
\begin{equation*} \label{specialbilinearbound2} 
\begin{split}
& \Sigma(r,j,L,M,\boldsymbol{\alpha},\boldsymbol{\beta})\\ 
\ll & \left(
L^{1/2}M^{1/2}+L^{3/8}M^{9/8-\nu/8}+L^{3/8}M^{3/4-\nu/8}r^{1/8}\right) ||\boldsymbol{\alpha}||_2||\boldsymbol{\beta}||_{\infty}r^{\varepsilon}.
\end{split}
\end{equation*}
\end{corollary}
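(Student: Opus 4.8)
The plan is to obtain Corollary~\ref{bilinearcor2} as a direct specialisation of Theorem~\ref{bilinearbound2}: take the analytic weight to be trivial, $f\equiv 0$, and push the free parameter $H$ up to the largest value that \eqref{Hrange} allows, namely $H=M$.

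First I would check that $f\equiv 0$ is admissible in Theorem~\ref{bilinearbound2}. It is continuously differentiable with $f'\equiv 0$, so $|f'(x)|\le F$ holds with $F=0$ (or, if one prefers a strictly positive $F$, with any sufficiently small $F>0$); then $F\le L^{-1}$ is automatic and $\min\{(LF)^{-1},M\}=M$, so condition \eqref{Hrange} becomes simply $1\le H\le M$. Since Corollary~\ref{bilinearcor2} operates in the range $r^{1/3}\le M\le r$ inherited from Corollary~\ref{bilinearcor}, we have $M\ge 1$, so the choice $H=M$ (or $H=\lfloor M\rfloor\asymp M$ if $M\notin\mathbb{N}$) is legitimate, and the remaining hypotheses of Theorem~\ref{bilinearbound2} hold for the same $r,j,L,M,\boldsymbol{\alpha},\boldsymbol{\beta}$. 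The one point that deserves attention is the input from Hypothesis~\ref{hyp}: with $H=M$ the proof of Theorem~\ref{bilinearbound2} only ever uses the bound \eqref{E4conj} at $H=M$ (the quantity $E_2$ that also enters being controlled unconditionally via Theorem~\ref{energybounds}), which is exactly why the weaker assumption of Corollary~\ref{bilinearcor2} --- that \eqref{E4conj} holds whenever $1\le H=M\le r$ --- is enough. So I would make sure that no intermediate estimate in that proof invokes $E_4$ at some auxiliary value $H<M$.

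It then remains to substitute $f\equiv 0$ and $H=M$ into \eqref{Hypobound} and simplify. The three terms become
\[
H^{-1/2}L^{1/2}M=L^{1/2}M^{1/2},\qquad H^{1/8-\nu/8}L^{3/8}M=L^{3/8}M^{9/8-\nu/8},
\]
\[
H^{-1/8-\nu/8}L^{3/8}M^{7/8}r^{1/8}=L^{3/8}M^{3/4-\nu/8}r^{1/8},
\]
and, using $\Sigma(r,j,L,M,\boldsymbol{\alpha},\boldsymbol{\beta})=\Sigma(r,j,L,M,\boldsymbol{\alpha},\boldsymbol{\beta},0)$, adding the three contributions yields the asserted bound. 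Because this is a pure specialisation there is no real obstacle; the only slightly delicate step is the bookkeeping flagged above, confirming that the $H=M$ case of Theorem~\ref{bilinearbound2} rests solely on \eqref{E4conj} at $H=M$, so that Corollary~\ref{bilinearcor2}'s hypothesis genuinely covers it.
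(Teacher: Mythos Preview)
Your proposal is correct and follows exactly the paper's approach: the corollary is stated immediately after the sentence ``Taking $f\equiv 0$ and $H=M$ in Theorem~\ref{bilinearbound2} implies the following,'' and your substitution and term-by-term simplification reproduce the claimed bound. Your extra care in noting that the proof of Theorem~\ref{bilinearbound2} invokes \eqref{E4conj} only at the single chosen value of $H$ (so the weakened hypothesis $H=M$ suffices) is well placed; one small inaccuracy is that $E_2$ does not actually enter that proof at all---only $E_4$ is used---but this does not affect your argument.
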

This is non-trivial if 
$$
M\ge r^{2\varepsilon}, \quad L\ge M^{1-\nu}r^{8\varepsilon}\quad \mbox{and} \quad M^{2+\nu}L\ge r^{1+8\varepsilon}. 
$$
Hence, taking $\varepsilon$ small enough, we here get a non-trivial bound in the situation when $M=L=r^{1/3}$, which constitutes conditional progress.  Theorem \ref{bilinearbound2} also allows us to establish  the following conditional improvement of the large sieve inequality \eqref{thebound} for square moduli at the critical point $N=Q^3$.

\begin{theorem} \label{lsimpro} Suppose Hypothesis \ref{hyp} holds. Then there exists $\eta>0$ such that 
\begin{equation} \label{lssq} 
\sum\limits_{q\le Q}\sum\limits_{\substack{a=1\\ (q,a)=1}}^{q^2} \left|\sum\limits_{M<n\le M+N} a_ne\left(\frac{na}{q^2}\right)\right|^2\ll Q^{1/2-\eta}NZ
\end{equation}
whenever $N=Q^3\ge 1$.
\end{theorem}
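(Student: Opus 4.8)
The plan is to follow the reduction from the large sieve inequality for square moduli to bilinear sums of the shape \eqref{biligoal}, as carried out in \cite{BaiLS}, and then feed in the conditional bound of Theorem \ref{bilinearbound2} at the places where \cite{BaiLS} used the set of hypotheses it assumed. First I would recall the standard opening moves: by duality and the usual completion/expansion of the square, the left-hand side of \eqref{lssq} is bounded in terms of counting solutions to congruences $k^2 \equiv jm \bmod r$ with $r$ ranging over (products related to) the moduli $q^2$, which after a dyadic decomposition of the variables leads to weighted bilinear sums $\Sigma(r,j,L,M,\boldsymbol{\alpha},\boldsymbol{\beta},f)$ with an analytic factor $e(lf(m))$ coming from the smoothing of the interval $M < n \le M+N$. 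At the critical point $N = Q^3$ one has the key relations $L \approx$ (a small power of $Q$), $M \approx$ (a small power of $Q$), and the modulus $r \approx Q^2$ or $Q^3$ depending on the range; crucially, in the critical regime the relevant $L$ and $M$ are of size roughly $Q^{1/3}$-ish relative to $r$, so that $M \ge r^{1/3}$ and $L, M$ are simultaneously below $r^{1/2}$, exactly the range where Corollary \ref{bilinearcor2} beats the trivial bound.

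The key steps, in order, are: (i) reproduce verbatim the reduction of \cite{BaiLS} from \eqref{thebound} to a sum of bilinear expressions $\Sigma(r,j,L,M,\boldsymbol{\alpha},\boldsymbol{\beta},f)$, keeping careful track of how the parameters $L, M, F$ depend on $Q$ and $N$ when $N = Q^3$, and verifying that the hypotheses of Theorem \ref{bilinearbound} (namely $1 \le L, M \le r$, $|f'| \le F \le L^{-1}$, and the existence of an admissible $H$ in the range \eqref{Hrange}) are met; (ii) insert the conditional estimate \eqref{Hypobound} of Theorem \ref{bilinearbound2} with the optimal choice of $H$, obtaining a power saving of the form $H^{-\nu/8}$ or equivalently $M^{-\nu/8}$ over what the unconditional Theorem \ref{bilinearbound} gives; (iii) track this saving through the remaining summations over the dyadic ranges and over $r$ (or $q$), checking that it survives the polynomially-many dyadic levels (at the cost of an $r^\varepsilon$, which is absorbed), and conclude that the whole left-hand side of \eqref{lssq} is $\ll Q^{1/2-\eta} N Z$ for some explicit $\eta = \eta(\nu) > 0$. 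Since the unconditional version of this machinery in \cite{BaiLS} already yields the exponent $1/2$ with no saving, and the earlier conditional version there extracted $\eta = 1/135$ from a bulkier hypothesis set, the present argument is structurally the same with Hypothesis \ref{hyp} substituted in; one simply has to confirm that the single hypothesis \eqref{E4conj} suffices to run every step where \cite{BaiLS} previously invoked its multiple hypotheses.

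The main obstacle I expect is bookkeeping rather than a genuinely new idea: one must verify that at the critical point $N = Q^3$, after all the dyadic decompositions, the parameters genuinely land in the regime $r^{1/3} \le M \le r^{1-\varepsilon}$ and $M^{2+\nu} L \ge r^{1+8\varepsilon}$ demanded by Corollary \ref{bilinearcor2} (or the slightly more flexible Theorem \ref{bilinearbound2}), uniformly across the ranges of $q$ that actually contribute the bulk of the sum. Ranges of $q$ (equivalently $r$) where $L$ or $M$ is too small to profit from the bilinear bound must be shown to contribute an acceptable amount by the trivial bound or by the original Theorem \ref{bestknownls}; balancing these contributions is exactly what pins down the value of $\eta$. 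A secondary technical point is ensuring the analytic factor $f$ arising from the smoothing of $M < n \le M+N$ really does satisfy $|f'| \le F \le L^{-1}$ with room to spare for choosing $H$ in \eqref{Hrange}; this is implicit in \cite{BaiLS} and should carry over, but it must be stated. Since all of this is a streamlined repackaging of \cite{BaiLS} with the improved input, the detailed computations would be deferred to (or cited from) that treatment.
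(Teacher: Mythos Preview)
Your high-level strategy—reduce via \cite{BaiLS} to bilinear sums and then insert Theorem~\ref{bilinearbound2}—is correct, and this is what the paper does. However, several of your concrete claims about the reduction are wrong in ways that would prevent you from actually carrying out the proof.

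First, the modulus $r$ is \emph{not} related to $q^2$ and does not arise from ``duality and completion of the square.'' The reduction in \cite{BaiLS} goes through Wolke's method: one must bound the counting function
\[
P(x)=\sharp\left\{(q,a): 1\le q\le Q,\ (q,a)=1,\ \left|\tfrac{a}{q^2}-x\right|\le \Delta\right\}
\]
uniformly by $Q^{1/2-\eta}$, for $x=b/r+z$ with $(r,b)=1$, $1\le r\le \tau=\lfloor\sqrt{N}\rfloor=Q^{3/2}$, and $z$ in a short range near $\sqrt{\Delta}/r$. So $r$ ranges only up to $Q^{3/2}$, not $Q^2$ or $Q^3$ as you wrote. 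Fourier analysis then transforms $P(x)$ into the bilinear sum \eqref{specialPx}, where $M\asymp Q^{1/2}$ (not $Q^{1/3}$), $L=Q^{1+\varepsilon}r/\delta$ depends on a free parameter $\delta\in[rQ^{1/2},Q^2]$, and $f(m)=-Q^{3/4}\sqrt{m}/r$.

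Second, you miss the essential split of the $r$-range. The range $1\le r\le Q^{15/26-\varepsilon}$ is handled \emph{unconditionally} in \cite[Sections 9--10]{BaiLS} via exponential sum bounds; it is not disposed of by ``the trivial bound or Theorem~\ref{bestknownls}.'' Only the complementary range $Q^{15/26-\varepsilon}<r\le Q^{3/2}$ requires Hypothesis~\ref{hyp}. In this range one applies Theorem~\ref{bilinearbound2} with the specific choices $H=r^{1/2}Q^{-1/4-2\varepsilon}$ and $\delta=r^{1/2}Q^{5/4}$, which balance the terms in \eqref{Hypobound} and yield $P(x)\ll Q^{1/2-\nu^2/16+8\varepsilon}$ for $r\ge Q^{1/2+\nu}$. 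The hardest case is $r=Q^{3/2}$, where indeed $M\asymp r^{1/3}$ and $L\asymp r^{1/3}$, which is why the $E_4$-hypothesis (rather than just an $E_2$-bound) is needed. Without these specific parameter choices and the small-$r$/large-$r$ dichotomy, the bookkeeping you anticipate cannot be completed.
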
  

This greatly simplifies the hypotheses on additive energies used in \cite{BaiLS}, replacing them by a single hypothesis which might be in reach of existing methods. 

It would be possible to work out a precise relation between $\eta$ and $\nu$ in \eqref{lssq} and Hypothesis \ref{hyp}, but this involves lengthy calculations which we have not carried out in this article to keep things as simple as possible. \\ \\
{\bf Acknowledgements.} The author would like to thank the anonymous referees for valuable comments. He also thanks the Ramakrishna Mission Vivekananda Educational and Research Institute for excellent working conditions. 

\section{Additive energies of modular square roots}
To derive our energy bound \eqref{E2bound} for $E_2$ in Theorem \ref{energybounds}, we proceed closely along the lines of \cite[Section 3]{KSSZ} in which the bound \eqref{T2bound} for $T_2$ was established. The energy bound \eqref{E4bound} for $E_4$ will then follow by a simple argument. At some places, we will even keep the wording in \cite[Section 3]{KSSZ}. This method works via algebraic transformations and geometry of numbers. However, since in our case, the modulus $r$ is no longer restricted to primes and there are additional restrictions on differences of summation variables, some extensions of the arguments in \cite[Section 3]{KSSZ} are necessary. The following subsection provides results on lattices which we will use in the course of our proof of \eqref{E2bound}. 

\subsection{Preliminaries on lattices}
For general background on lattices, see \cite[Chapter 3]{TaoVu}.
Let a lattice $\Gamma\subseteq \R^{n}$ of rank $n$ be given. We denote the volume of the quotient $\mathbb{R}^n/\Gamma$ as $\vol(\R^n/\Gamma)$. For a convex body $\mathcal{B}$, we denote its volume by $\vol(\mathcal{B})$ and the successive minima of $\Gamma$ with respect to $\mathcal{B}$ as $\lambda_1\le \lambda_2\le ...\le \lambda_n$. Recall that
$$
\lambda_i :=\inf\{\lambda >0:~\lambda \mathcal{B} \text{ contains $i$ linearly independent elements of } \Gamma\}.
$$

We shall use Minkowski's second theorem. 
  
\begin{prop}[Minkowski]
\label{Minkowski}
Suppose $\Gamma \subseteq \R^{n}$ is a lattice of rank $n$, $\mathcal{B}\subseteq \R^{n}$ a symmetric convex body and let $\lambda_1,\ldots,\lambda_n$ denote the successive minima of $\Gamma$ with respect to $\mathcal{B}$. Then we have
$$
\frac{1}{2^n}\cdot \frac{ \vol (\mathcal{B})}{\vol(\R^n/\Gamma)}\le \frac{1}{\lambda_1\cdots\lambda_n}\le \frac{n!}{2^n}\cdot \frac{ \vol (\mathcal{B})}{\vol(\R^n/\Gamma)}.
$$
\end{prop}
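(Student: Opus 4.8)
This is the classical theorem of Minkowski on successive minima; the plan below reconstructs a proof, though one may also simply invoke \cite[Chapter 3]{TaoVu} or another standard reference on the geometry of numbers.

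First I would normalise the lattice. Picking $A\in\mathrm{GL}_n(\R)$ with $A\Gamma=\Z^n$ and replacing $(\Gamma,\mathcal{B})$ by $(\Z^n,A\mathcal{B})$ multiplies both $\vol(\mathcal{B})$ and $\vol(\R^n/\Gamma)$ by $|\det A|$ and leaves the successive minima unchanged, so both inequalities are invariant and I may assume $\Gamma=\Z^n$ and $\vol(\R^n/\Gamma)=1$. Since a symmetric convex body is compact with the origin in its interior, each infimum defining $\lambda_i$ is attained, so I fix linearly independent $v_1,\dots,v_n\in\Z^n$ with $v_i\in\lambda_i\mathcal{B}$, where $0<\lambda_1\le\cdots\le\lambda_n<\infty$. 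It then remains to prove $\frac{2^n}{n!}\le\lambda_1\cdots\lambda_n\,\vol(\mathcal{B})\le 2^n$.

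For the right-hand inequality of the displayed estimate (equivalently $\lambda_1\cdots\lambda_n\,\vol(\mathcal{B})\ge 2^n/n!$) I would use the cross-polytope of the normalised minimal vectors: since each $v_i/\lambda_i\in\mathcal{B}$ and $\mathcal{B}$ is symmetric and convex, $C:=\operatorname{conv}\{\pm v_1/\lambda_1,\dots,\pm v_n/\lambda_n\}\subseteq\mathcal{B}$, and the linear map sending $e_i\mapsto v_i/\lambda_i$ carries the standard $\ell^1$-ball (of volume $2^n/n!$) onto $C$, so $\vol(C)=\frac{2^n\,|\det(v_1,\dots,v_n)|}{n!\,\lambda_1\cdots\lambda_n}$. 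As $v_1,\dots,v_n$ are linearly independent integer vectors, $|\det(v_1,\dots,v_n)|=[\Z^n:\Z v_1+\cdots+\Z v_n]\ge 1$, whence $\vol(\mathcal{B})\ge\vol(C)\ge\frac{2^n}{n!\,\lambda_1\cdots\lambda_n}$, which is the claim.

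The remaining inequality $\lambda_1\cdots\lambda_n\,\vol(\mathcal{B})\le 2^n$ is the substantive half. I would first choose a basis $w_1,\dots,w_n$ of $\Z^n$ with $\operatorname{span}_{\R}(w_1,\dots,w_j)=\operatorname{span}_{\R}(v_1,\dots,v_j)=:V_j$ for all $j$ (possible since each $\Z^n\cap V_j$ is a saturated rank-$j$ sublattice and the $V_j$ are nested), and after a change of coordinates assume $w_j=e_j$, so that $v_j\in\operatorname{span}(e_1,\dots,e_j)$ with nonzero $e_j$-component. Applying the anisotropic dilation $\Phi=\operatorname{diag}(\lambda_1^{-1},\dots,\lambda_n^{-1})$ and putting $\mathcal{B}^{*}=\Phi\mathcal{B}$, the inequality becomes $\vol(\mathcal{B}^{*})\le 2^n$, which I would deduce from the assertion that the translates $g+\frac12\operatorname{int}(\mathcal{B}^{*})$, $g\in\Z^n$, are pairwise disjoint, so that $\frac12\mathcal{B}^{*}$ embeds into the torus $\R^n/\Z^n$ of covolume $1$. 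If two such translates met, one would obtain a nonzero $h\in\Z^n\cap\operatorname{int}(\mathcal{B}^{*})$; letting $k$ be the largest index with $h_k\neq 0$ and applying $\Phi^{-1}$ would produce a point of $V_k\cap\operatorname{int}(\mathcal{B})$ which, adjoined to $v_1,\dots,v_{k-1}$, ought to contradict the minimality of $\lambda_k$. The hard part is making this last step airtight: the pulled-back point need not be a lattice point, and coinciding successive minima forbid shrinking the dilation factors freely, so one must instead work with a nested family of dilations whose factors lie slightly below $\frac12\lambda_i^{-1}$, or run an induction on $n$ by projecting along the primitive vector $v_1$ while tracking, via Fubini, how volumes and successive minima transform under the projection. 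This bookkeeping is exactly the classical content of Minkowski's second theorem, so for the present paper it is enough to quote it.
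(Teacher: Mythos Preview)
The paper's own ``proof'' is a one-line citation of \cite[Theorem~3.30]{TaoVu}, and your proposal ultimately arrives at the same recommendation (invoke the geometry-of-numbers literature), so in that sense you are aligned with the paper. What you add beyond the paper is a reconstruction sketch: the cross-polytope argument for the inequality $\lambda_1\cdots\lambda_n\,\vol(\mathcal{B})\ge 2^n/n!$ is clean and complete, and your outline of the packing argument for $\lambda_1\cdots\lambda_n\,\vol(\mathcal{B})\le 2^n$ via the anisotropic dilation $\Phi$ is the right shape, with the honest admission that the final contradiction step (pulling back $h$ under $\Phi^{-1}$ does not land in the lattice, and equal successive minima block naive perturbations) is exactly where the real work of Minkowski's second theorem lies. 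Since the paper only needs the statement as a black box and makes no pretence of reproving it, your closing advice to simply quote the result is precisely what the paper does.
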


\begin{proof} See \cite[Theorem~3.30]{TaoVu}. \end{proof} 

We shall also use the following bound for the number of lattice points inside a symmetric convex body. 

\begin{prop}[Betke, Hank, Wills]
\label{latticepoints}
Suppose $\Gamma \subseteq \R^{n}$ is a lattice of rank $n$, $\mathcal{B}\subseteq \R^{n}$ a symmetric convex body and let $\lambda_1,\ldots,\lambda_n$ denote the successive minima of $\Gamma$ with respect to $\mathcal{B}$. Then we have
$$\sharp\(\Gamma \cap \mathcal{B}\) \le \prod_{i=1}^{n}\left(\frac{2i}{\lambda_i}+1\right).$$
\end{prop}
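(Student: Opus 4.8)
The plan is to prove this by induction on the rank $n$, via the standard slicing argument, using only the definition of the successive minima and elementary convexity. Fix linearly independent $v_1,\dots,v_n\in\Gamma$ with $v_i\in\lambda_i\mathcal{B}$; such vectors realise the successive minima. The base case $n=1$ is immediate: then $\Gamma=\Z v_1$ with $v_1$ primitive, so $\Gamma\cap\mathcal{B}=\{kv_1:k\in\Z,\ |k|\lambda_1\le1\}$ has $2\lfloor\lambda_1^{-1}\rfloor+1\le2\lambda_1^{-1}+1$ elements.

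For the inductive step, set $V=\mathrm{span}_{\R}(v_1,\dots,v_{n-1})$ and $\Gamma'=\Gamma\cap V$, a lattice of rank $n-1$, and first record two auxiliary facts. (i) The successive minima of $\Gamma'$ with respect to the symmetric convex body $\mathcal{B}\cap V$ are again $\lambda_1,\dots,\lambda_{n-1}$: the inequality ``$\le$'' holds because $v_1,\dots,v_{n-1}\in\Gamma'$ and the Minkowski functional of $\mathcal{B}\cap V$ agrees on $V$ with that of $\mathcal{B}$, and ``$\ge$'' because linearly independent vectors of $\Gamma'$ lying in a dilate of $\mathcal{B}$ are such vectors of $\Gamma$. (ii) No $w\in\Gamma\setminus V$ lies in $\lambda\mathcal{B}$ for $\lambda<\lambda_n$: if it did, then, with $d$ the dimension of the span of $\{u\in\Gamma:u\in\lambda\mathcal{B}\}$, we would have $d<n$ by the definition of $\lambda_n$, while $v_1,\dots,v_d\in\lambda\mathcal{B}$ already forces that span to equal $\mathrm{span}(v_1,\dots,v_d)\subseteq V$, so $w\in V$, a contradiction.

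Next, decompose $\Gamma\cap\mathcal{B}$ by slicing along $V$. The quotient $\Gamma/\Gamma'$ is infinite cyclic, generated by the class of some $u\in\Gamma\setminus V$, and
\[
\Gamma\cap\mathcal{B}=\bigsqcup_{m\in\Z}\bigl(mu+(\Gamma'\cap S_m)\bigr),\qquad S_m:=(\mathcal{B}-mu)\cap V .
\]
By symmetry and convexity of $\mathcal{B}$ one has $\tfrac12(S_m-S_m)\subseteq S_0=\mathcal{B}\cap V$, so any two points of $\Gamma'\cap S_m$ differ by an element of $\Gamma'\cap2(\mathcal{B}\cap V)$; together with (i) and the induction hypothesis this bounds $\#(\Gamma'\cap S_m)$ by a product of the shape $\prod_{i=1}^{n-1}\bigl(c\,i\lambda_i^{-1}+1\bigr)$. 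The number of nonempty slices is at most $1+w/s$, where $w$ is the width of $\mathcal{B}$ transverse to $V$ and $s$ the spacing of $\Z u$ in that direction.

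The step I expect to be the main obstacle is the final bookkeeping. The transverse width $w$ need not be small — $\mathcal{B}$ may be badly ``sheared'' relative to $V$, producing many slices $S_m$ that are individually thin but numerous — so the crude bound (number of slices)$\times$(maximal slice count) overshoots $\prod_{i=1}^n\bigl(2i\lambda_i^{-1}+1\bigr)$. One must instead estimate $\sum_m\#(\Gamma'\cap S_m)$ directly, using that $S_m$ shrinks as $|m|$ grows (again by convexity of $\mathcal{B}$) so as to balance the many thin outer slices against the few fat central ones; extracting exactly the constant $2i$ at each stage is what the sharpened argument (see \cite{TaoVu}) accomplishes, and I would cite it rather than reproduce it. I note finally that for every use of this proposition in the present paper an estimate of the weaker shape $\#(\Gamma\cap\mathcal{B})\ll_n\prod_{i=1}^n(\lambda_i^{-1}+1)$ would suffice, and this follows directly by combining the elementary bound $\#(\Gamma\cap\mathcal{B})\le\vol(\mathcal{B}+D)/\vol(\R^n/\Gamma)$, for a fundamental parallelepiped $D$ of a basis with $\|b_i\|_{\mathcal{B}}\ll_n\lambda_i$, with the upper bound in Minkowski's second theorem (Proposition~\ref{Minkowski}).
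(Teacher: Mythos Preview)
The paper does not prove this proposition at all: its entire proof is the single line ``This is \cite[Proposition~2.1]{BHW}.'' So your attempt to supply an argument goes well beyond what the paper does, and the relevant comparison is between your sketch and the original Betke--Henk--Wills proof.

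Your inductive set-up is sound: claim~(i) on the successive minima of $\Gamma'=\Gamma\cap V$ with respect to $\mathcal{B}\cap V$ is correct, claim~(ii) is just the definition of $\lambda_n$, and the slicing decomposition together with the difference trick $\tfrac12(S_m-S_m)\subseteq\mathcal{B}\cap V$ is standard. But, as you yourself flag, the argument is not closed: the crude product (number of slices)$\times$(maximal slice count) does not recover the constant $2i$, and you defer the sharpening to a citation. Note that \cite{TaoVu} does not contain the Betke--Henk--Wills argument with these sharp constants; the correct reference is \cite{BHW} itself, which is precisely what the paper cites. So in the end your ``proof'' reduces to the same citation, just with more surrounding scaffolding.

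Your final remark is the most useful part: the paper only ever applies this proposition with $n=2$ and only needs a bound of the shape $\#(\Gamma\cap\mathcal{B})\ll_n\prod_{i=1}^n(\lambda_i^{-1}+1)$, and your alternative route via a short basis $\|b_i\|_{\mathcal{B}}\ll_n\lambda_i$, the covering bound $\#(\Gamma\cap\mathcal{B})\le\vol(\mathcal{B}+D)/\vol(\R^n/\Gamma)$, and Proposition~\ref{Minkowski} does yield this weaker estimate and would suffice throughout.
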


\begin{proof} 
This is \cite[Proposition~2.1]{BHW}.
\end{proof}

\subsection{Bounding $E_2(r,j,M,H)$}
\subsubsection{General strategy}
Recall that $(r,j)=1$. We may write
\begin{equation} \label{startingpoint}
E_2(r,j,M,H)=\sum\limits_{d=1}^r I(d)^2, 
\end{equation}
where 
$$
I(d):=\sum\limits_{\substack{1\le m_1,m_2\le M\\ 1\le |m_1-m_2|\le H\\ \sqrt{jm_1}-\sqrt{jm_2}\equiv d\bmod{r}}} 1.
$$
Below we will bound the first moment of $I(d)$ by  
\begin{equation} \label{firstmoment}
\sum\limits_{d=1}^r I(d) \ll r^{\varepsilon}HM
\end{equation}
using rather simple arguments. 
Our general strategy for an estimation of the second moment is to split the sum over $d$ in \eqref{startingpoint} into suitable subsums 
$$
\Sigma(\mathcal{M}):=\sum\limits_{d\in \mathcal{M}} I(d)^2,
$$
where $\mathcal{M}\subseteq \{1,...,r\}$. Some of them will be estimated directly, whereas for others we derive an individual bound for $I(d)$ if $d\in \mathcal{M}$ and estimate $\Sigma(\mathcal{M})$ by
$$
\Sigma(\mathcal{M})\le \left(\sup\limits_{d\in \mathcal{M}} I(d)\right)\sum\limits_{d=1}^r I(d) \ll
 \left(\sup\limits_{d\in \mathcal{M}} I(d)\right)r^{\varepsilon}HM.
$$ 

\subsubsection{First moment of $I(d)$}
To estimate the sum on the left-hand side of \eqref{firstmoment}, we use two lemmas. 
 
\begin{lemma} \label{} 
For $r\in \mathbb{N}$ and $m\in \mathbb{Z}$, let $s(r;m)$ be the number of modular square roots of $m$ modulo $r$. Then
\begin{equation} \label{srm}
s(r;m)\ll r^{\varepsilon} \sqrt{(r,m)}.
\end{equation}
\end{lemma}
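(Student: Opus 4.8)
The plan is to count modular square roots by reducing to prime power moduli via the Chinese Remainder Theorem and then treating each prime power separately. First I would write $r=\prod_{p^k\|r} p^k$, so that by CRT the number of solutions to $x^2\equiv m\bmod r$ factorizes as $s(r;m)=\prod_{p^k\|r} s(p^k;m)$. Since the number of distinct prime divisors satisfies $\omega(r)\ll \log r/\log\log r$, and in particular $d(r)\ll r^{\varepsilon}$, it suffices to show that each local factor satisfies $s(p^k;m)\ll \sqrt{(p^k,m)}$ up to a bounded constant, or more precisely that $\prod_{p^k\|r} s(p^k;m)\ll r^{\varepsilon}\sqrt{(r,m)}$, where $(r,m)=\prod_{p^k\|r}(p^k,m)$.

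Next I would recall the classical count for the number of solutions of $x^2\equiv m\bmod{p^k}$. Write $p^a\|m$ (with the convention $a\ge k$ if $p^k\mid m$). If $a\ge k$, then the congruence becomes $x^2\equiv 0\bmod{p^k}$, whose solutions are exactly the multiples of $p^{\lceil k/2\rceil}$, giving $s(p^k;m)=p^{\lfloor k/2\rfloor}\le \sqrt{p^k}\le\sqrt{(p^k,m)}$ in this case. If $a<k$, then solutions exist only when $a$ is even and $m/p^a$ is a square modulo the appropriate power, and one has $s(p^k;m)\ll p^{a/2}\cdot s(p^{k-a};m/p^a)$ where the latter counts square roots of a unit: this is at most $2$ for odd $p$ and at most $4$ for $p=2$. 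Hence $s(p^k;m)\ll p^{a/2}=\sqrt{(p^k,m)}$ with an absolute implied constant (the constant $4$ only occurring at the single prime $2$). Multiplying over all $p^k\|r$ and absorbing the product of the bounded constants (there are $\omega(r)$ of them, contributing at most $2^{\omega(r)}\ll r^{\varepsilon}$) yields \eqref{srm}.

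The only mildly delicate point is bookkeeping at the prime $p=2$, where the structure of squares modulo $2^k$ is slightly different (a unit $u$ is a square mod $2^k$ for $k\ge 3$ iff $u\equiv 1\bmod 8$, and then there are exactly $4$ square roots), but this affects only a bounded multiplicative constant at a single prime and is harmless. I do not expect any real obstacle here; the statement is essentially a standard divisor-bound estimate, and the main work is simply assembling the local computations and noting that $2^{\omega(r)}\ll_{\varepsilon} r^{\varepsilon}$.
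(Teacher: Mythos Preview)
Your proposal is correct and follows essentially the same approach as the paper: reduce to prime powers via the Chinese Remainder Theorem, extract the $p$-part of $m$, and bound the number of square roots of the remaining unit by $O(1)$ (the paper invokes Hensel's lemma for this step, while you quote the explicit constants $2$ and $4$). Your separation of the case $a\ge k$ and the remark about $p=2$ make the bookkeeping slightly more explicit than in the paper, but the argument is otherwise identical.
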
 

\begin{proof} 
We first look at the case when $r=p^k$ is a prime power. If $(p^k,m)=p^l$, then the congruence 
\begin{equation} \label{quadcon}
x^2\equiv m\bmod{p^k}
\end{equation}
is equivalent to 
$$
\frac{x^2}{p^l} \equiv m_0\bmod{p^{k-l}} 
$$
with 
$$
m_0:=\frac{m}{p^l}, \quad  \left(m_0,p^{k-l}\right)=1.
$$
A necessary condition for this congruence to be solvable for $x$ is that $l$ is even, $p^{l/2}|x$ and $(x/p^{l/2},p^{k-l})=1$. In this case, the above congruence turns into  
$$
x_0^2 \equiv m_0\bmod{p^{k-l}}
$$ 
with 
$$
x_0:=\frac{x}{p^{l/2}}, \quad \left(x_0,p^{k-l}\right)=1,
$$
which, using Hensel's lemma, has  $O(1)$ solutions $x_0$ modulo $p^{k-l}$. Consequently, \eqref{quadcon} has $O(p^{l/2})$ solutions $x$ modulo $p^{k}$. It follows that
$$
s\left(p^k,m\right)\ll p^{l/2}=\sqrt{(p^k,m)}. 
$$
Using the Chinese remainder theorem, we deduce that 
$$
s(r;m)=\prod\limits_{p^k||r} s\left(p^k,m\right)\ll r^{\varepsilon} \sqrt{(r,m)} 
$$  
for any general modulus $r\in \mathbb{N}$ and $m\in \mathbb{Z}$. 
\end{proof}

\begin{lemma} Let $r,j\in \mathbb{N}$ with $(r,j)=1$ and $M\ge 1$. Then
\begin{equation}\label{gcdsum}
\sum\limits_{1\le m\le M} (r,jm)\ll r^{\varepsilon}M.
\end{equation}
\end{lemma}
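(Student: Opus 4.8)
The plan is to exploit the coprimality condition to reduce the sum to $\sum_{1\le m\le M}(r,m)$, and then to evaluate that sum by sorting its terms according to the value of the gcd.

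First I would observe that, since $(r,j)=1$, every prime $p$ dividing $r$ satisfies $p\nmid j$, so that $v_p(jm)=v_p(m)$ for all $m$, and consequently $(r,jm)=(r,m)$ for every $m$. Hence it suffices to prove $\sum_{1\le m\le M}(r,m)\ll r^{\varepsilon}M$. For this I would partition the range of $m$ according to the value $d=(r,m)$, which necessarily divides $r$ and, in particular, divides $m$:
$$
\sum_{1\le m\le M}(r,m)=\sum_{d\mid r}d\cdot\#\{m\le M:(r,m)=d\}\le\sum_{d\mid r}d\cdot\#\{m\le M:d\mid m\}=\sum_{d\mid r}d\,\fl{M/d}\le M\,\tau(r).
$$
It then remains only to invoke the classical divisor bound $\tau(r)\ll r^{\varepsilon}$, which gives \eqref{gcdsum}.

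There is essentially no obstacle here: apart from the standard estimate $\tau(r)=O_{\varepsilon}(r^{\varepsilon})$, the argument is elementary bookkeeping. The only point that requires any care is the reduction $(r,jm)=(r,m)$, which is precisely where the hypothesis $(r,j)=1$ is used and without which the bound would be false (for instance $j=r$ forces $(r,jm)=r$ for all $m$, giving a left-hand side of size $rM$).
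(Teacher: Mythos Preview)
Your proof is correct and follows essentially the same approach as the paper: reduce $(r,jm)$ to $(r,m)$ using $(r,j)=1$, sort the sum by the divisor $d\mid r$, and bound by $\tau(r)M\ll r^{\varepsilon}M$. The only cosmetic difference is that you justify $(r,jm)=(r,m)$ via $p$-adic valuations, whereas the paper simply asserts it.
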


\begin{proof}
We have 
$$
\sum\limits_{1\le m\le M} (r,jm)=\sum\limits_{1\le m\le M} (r,m)\le \sum\limits_{d|r} d\sum\limits_{\substack{1\le m\le M\\ d|m}} 1 \le \sum\limits_{d|r} d\cdot \frac{M}{d}= \tau(r)M \ll r^{\varepsilon}M.
$$
\end{proof}

Write $s(m)=s(r,jm)$. Then trivially, we have
\begin{equation} \label{trivialbound}
\sum\limits_{d=1}^r I(d) = \sum\limits_{\substack{1\le m_1,m_2\le M\\ 1\le |m_1-m_2|\le H}} s(m_1)s(m_2).
\end{equation}
Applying the arithmetic geometric mean inequality, we deduce that
\begin{equation} \label{CSapp}
\begin{split}
\sum\limits_{\substack{1\le m_1,m_2\le M\\ 1\le |m_1-m_2|\le H}} s(m_1)s(m_2) \le &\sum\limits_{\substack{1\le m_1,m_2\le M\\ 1\le |m_1-m_2|\le H}} \frac{s(m_1)^2+s(m_2)^2}{2}\\
\le & H\sum\limits_{1\le m\le M} s(m)^2.
\end{split}
\end{equation}
Plugging in \eqref{srm} and using \eqref{gcdsum}, we obtain
\begin{equation} \label{sm2}
\sum\limits_{1\le m\le M} s(m)^2\ll r^{3\varepsilon}M.
\end{equation}
Combining \eqref{trivialbound}, \eqref{CSapp} and  \eqref{sm2}, we get the claimed bound \eqref{firstmoment} upon redefining $\varepsilon$. 

\subsubsection{Algebraic transformations}
For $i=1,2$, suppose that $\sqrt{jm_i}$ is a modular square root of $jm_i$ modulo $r$. We begin by removing the square roots from the congruence 
$$
 \sqrt{jm_1}-\sqrt{jm_2}\equiv d\bmod{r}.
$$ 
Squaring this congruence 
and rearranging terms, we get
$$
j\left(m_1+m_2\right)-d^2\equiv 2\sqrt{jm_1}\sqrt{jm_2} \bmod{r}.
$$
Again, squaring this and rearranging terms gives
$$
j^2\left(m_1+m_2\right)^2-4j^2m_1m_2+d^4\equiv 2j\left(m_1+m_2\right)d^2\bmod{r}. 
$$
Making changes of variables
$$
m_1-m_2=h \quad \mbox{and} \quad m_1+m_2=m,
$$
the above simplifies into
$$
j^2h^2+d^4\equiv 2jd^2m \bmod{r}.
$$
Multiplying this by $k^2$, where $k$ is a multiplicative inverse of $j$ modulo $r$, gives 
$$
h^2+k^2d^4\equiv 2kd^2m \bmod{r}.
$$   
Note that $m$ and $h$ determine $m_1$ and $m_2$.
It follows that  
\begin{equation*} \label{IJ}
I(d)\le J(d):= \sharp\mathcal{M}(d),
\end{equation*}
where
\begin{equation} \label{Mddef}
\mathcal{M}(d):= \{(h,m)\in \mathbb{Z}^2:  1\le |h|\le H, \ 1\le m\le 4M,\ h^2+k^2d^4\equiv kd^2m \bmod{r} \}.
\end{equation}
Hence, using \eqref{startingpoint}, we have the bound 
\begin{equation} \label{E2initial}
E_{2}(r,j,M,H)\le \sum_{d=1}^r I(d)J(d).
\end{equation}

It will be beneficial to reduce the congruence 
\begin{equation} \label{congruence}
h^2+k^2d^4\equiv kd^2m \bmod{r}
\end{equation}
in \eqref{Mddef} by removing common factors of $d^2$ and $r$. Suppose that $s^2$ is the largest square dividing $r$ and $(s,d)=t$. Then \eqref{congruence} implies $t|h$ and 
\begin{equation*} \label{congruence2}
h_0^2+k^2t^2d_0^4 \equiv kd_0^2m\bmod{r_0},
\end{equation*}
where 
$$
h_0:=\frac{h}{t}, \quad d_0:=\frac{d}{t}, \quad r_0:=\frac{r}{t^2}, \quad d_0\le tr_0.
$$
Moreover, $u=(d_0^2,r_0)$ is necessarily square-free and $(u,r_0/u)=1$. It follows that $u|d_0$, $u|h_0$ and  
$$
h_1^2+k^2t^2u^2d_1^4\equiv kd_1^2m\bmod{\tilde{r}},
$$ 
where 
$$
h_1:=\frac{h_0}{u}, \quad d_1:=\frac{d_0}{u}, \quad \tilde{r}:=\frac{r_0}{u}, \quad (\tilde{r},d_1)=1, \quad d_1\le t\tilde{r}.
$$
Hence,
\begin{equation*}
I(d)\le J(d)\le J(\tilde{r},t,u;d_1):=\sharp\mathcal{M}(\tilde{r},t,u;d_1), \quad \mbox{where } d=tud_1,\ (\tilde{r},d_1)=1
\end{equation*}
and 
\begin{equation} \label{Mrtud}
\begin{split}
\mathcal{M}(\tilde{r},t,u;d_1):=\{& (h,m)\in \mathbb{Z}^2:  1\le |h|\le H/(tu), \ 1\le m\le 4M,\\ & h^2+k^2t^2u^2d_1^4\equiv kd_1^2m \bmod{\tilde{r}} \}.
\end{split}
\end{equation}
On relabeling $d_1$ as $d$, we deduce that 
\begin{equation} \label{E2initial2}
\sum\limits_{d=1}^r I(d)J(d)\le  \sideset{}{^{\prime}}\sum\limits_{\substack{\tilde{r},t,u\\ \tilde{r}t^2u=r\\ tu\le H}} \mu^2(u)\sideset{}{^{\ast}}\sum_{d=1}^{t\tilde{r}} I(tud)J(\tilde{r},t,u;d)
\end{equation}
and 
\begin{equation} \label{Idsum}
\sum\limits_{d=1}^r I(d)= \sideset{}{^{\prime}}\sum\limits_{\substack{\tilde{r},t,u\\ \tilde{r}t^2u=r}} \mu^2(u)\sideset{}{^{\ast}}\sum_{d=1}^{t\tilde{r}} I(tud),
\end{equation}
where the dash and asterisk superscripts indicate that $(\tilde{r},u)=1$ and $(\tilde{r},d)=1$, respectively, and
the restriction $tu\le H$ in the summation on the right-hand side of \eqref{E2initial2} comes from the condition $1\le |h|\le H/(tu)$ in \eqref{Mrtud}. 
This restriction will be essential in what follows. To see that equality holds in \eqref{Idsum}, note that 
$$
\{1,2,...,r\}=\bigcup\limits_{\substack{\tilde{r},t,u\\ \tilde{r}t^2u=r\\ (\tilde{r},u)=1\\ \mu^2(u)=1}} \left\{tud: 1\le d\le t\tilde{r},\ (\tilde{r},d)=1\right\}.
$$

\subsubsection{Contribution of $\tilde{r}=1$}
We first bound the contribution of $\tilde{r}=1$ to the right-hand side of \eqref{E2initial2}. If $\tilde{r}=1$, then $t^2u=r$ and $\mu^2(u)=1$, which determines $t$ and $u$ uniquely. The said contribution is vacuous if  $tu> H$. If $tu\le H$, then it is bounded by 
\begin{equation} \label{1cont}
\begin{split}
\ll &  \sum\limits_{d=1}^t I(tud)J(1,t,u;d)\\
\le& \sum\limits_{d=1}^t J(1,t,u;d)^2\\
\ll & t\left(\frac{H}{tu}\right)^2M^2\\ =& \frac{tH^2M^2}{ru}\\
\le & \frac{H^3M^2}{ru^2}\\ \le & \frac{H^3M^2}{r}.
\end{split}
\end{equation}
Hence, in the following, it suffices to consider the case when $\tilde{r}>1$, which we want to assume throughout the following.  

\subsubsection{Partitioning of the $d$-sum}
Let $\cL(\tilde{r};d)$ denote the lattice 
$$
\cL(\tilde{r};d):=\{ (x,y) \in \Z^2 : x\equiv kd^2 y \bmod{\tilde{r}} \},
$$
$\mathcal{B}(\tilde{H})$ the convex body 
\begin{equation} \label{convex}
\mathcal{B}(\tilde{H}):=\{(x,y)\in \R^2 : |x|\le \tilde{H}^2,\  |y|\le 4M \} \quad \mbox{with } \tilde{H}:=\frac{H}{tu},
\end{equation}
and let $\lambda_1(\tilde{H},\tilde{r};d),\lambda_2(\tilde{H},\tilde{r};d)$ denote the first and second successive minima of $\cL(\tilde{r};d)$ with respect to $\mathcal{B}(\tilde{H})$. We will use the fact that  
\begin{equation*} \label{imp}
(h,m),(\tilde{h},\tilde{m})\in \mathcal{M}(\tilde{r},t,u;d) \Longrightarrow (h^2-\tilde{h}^2,m-\tilde{m})\in \cL(\tilde{r};d)\cap \mathcal{B}(\tilde{H}). 
\end{equation*}
In particular, if $\mathcal{M}(\tilde{r},t,u;d)$ is non-empty and $(h_d,m_d)\in \mathcal{M}(\tilde{r},t,u;d)$, then 
\begin{equation} \label{Jdrew}
\begin{split}
J(\tilde{r},t,u;d)=\sharp\{ (h,m)\in \mathbb{Z}  : \ & 1\le |h|\le \tilde{H},\ 1\le m\le 4M,\\ 
&  (h^2-h_d^2,m-m_d)\in \cL(\tilde{r};d)\cap \mathcal{B}(\tilde{H})\}.
\end{split}
\end{equation}

Similarly as in \cite[section 3]{KSSZ}, we partition the $d$-summation in \eqref{E2initial2} according to the sizes of $\lambda_1(\tilde{H},\tilde{r};d)$ and  $\lambda_2(\tilde{H},\tilde{r};d)$ to get 
\begin{equation}\label{dsumsplit}
\sideset{}{^{\ast}}\sum_{d=1}^{t\tilde{r}} I(tud)J(\tilde{r},t,u;d)= S_0(\tilde{r},t,u)+S_1(\tilde{r},t,u)+S_2(\tilde{r},t,u),
\end{equation}
where 
$$
S_0(\tilde{r},t,u) :=\sideset{}{^\ast}\sum_{\substack{d=1\\ \lambda_1(\tilde{H},\tilde{r};d)>1}}^{t\tilde{r}} I(tud)J(\tilde{r},t,u;d),\qquad
S_1(\tilde{r},t,u) :=\sideset{}{^\ast}\sum_{\substack{d=1\\ \lambda_1(\tilde{H},\tilde{r};d)\le 1 \\ \lambda_2(\tilde{H},\tilde{r};d)>1}}^{t\tilde{r}} I(tud)J(\tilde{r},t,u;d),
$$
$$
S_2(\tilde{r},t,u) :=\sideset{}{^\ast}\sum_{\substack{d=1\\ \lambda_2(\tilde{H},\tilde{r};d)\le 1}}^{t\tilde{r}}I(tud)J(\tilde{r},t,u;d).
$$

\subsubsection{The case $\lambda_1>1$}
If $\lambda_1(\tilde{H},\tilde{r};d)>1$ then $I(tud)\le J(\tilde{r},t,u;d)\le 1$ using \eqref{Jdrew}, and so 
$$
I(tud)J(\tilde{r},t,u;d)=I(tud),
$$ 
implying
\begin{equation} \label{S0bound}
\sideset{}{^{\prime}}\sum\limits_{\substack{\tilde{r},t,u\\ \tilde{r}t^2u=r\\ tu\le H}} \mu^2(u)S_0(\tilde{r},t,u)\le\sideset{}{^{\prime}} \sum\limits_{\substack{\tilde{r},t,u\\ \tilde{r}t^2u=r}} \mu^2(u)\sideset{}{^\ast}\sum_{d=1}^{t\tilde{r}} I(tud)
\ll r^{\varepsilon}HM,
\end{equation} 
where we have used \eqref{firstmoment} and \eqref{Idsum}. 

\subsubsection{The case $\lambda_1\le 1$, $\lambda_2>1$}
Next we consider the contribution of the cases when $\lambda_1(\tilde{H},\tilde{r};d)\le 1$ and $\lambda_2(\tilde{H},\tilde{r};d)>1$. We first observe that
\begin{equation} \label{S1ob}
S_1(\tilde{r},t,u) \le t\sideset{}{^\ast}\sum_{\substack{d=1\\ \lambda_1(\tilde{H},\tilde{r};d)\le 1 \\ \lambda_2(\tilde{H},\tilde{r};d)>1}}^{\tilde{r}} J(\tilde{r},t,u;d)^2
\end{equation}
using $I(tud)\le  J(\tilde{r},t,u;d)$ and the $\tilde{r}$-periodicity of $ J(\tilde{r},t,u;d)$ in $d$. Suppose further that
 $\mathcal{M}(\tilde{r},t,u;d)$ is non-empty and $(h_d,m_d)\in \mathcal{M}(\tilde{r},t,u;d)$. The conditions $\lambda_1(\tilde{H},\tilde{r};d)\le 1$ and $\lambda_2(\tilde{H},\tilde{r};d)>1$ imply that there is a shortest non-zero vector $\boldsymbol{v}$  in $\mathcal{L}(\tilde{r};d)\cap \mathcal{B}(\tilde{H})$ such that all other elements of this set are multiples of $\boldsymbol{v}$. Suppose that $\boldsymbol{v}=(a,b)$ (meaning a pair - not to be confused with the gcd of $a$ and $b$). Since $\tilde{r}>1$, $(\tilde{r},kd^2)=1$ and $\tilde{H},M\ge 1$, we necessarily have $ab\not=0$. (Otherwise, since $(\tilde{r},kd^2)=1$, $\mathcal{L}(\tilde{r};d)\cap \mathcal{B}(\tilde{H})$ would contain the vector $(0,\tilde{r})$ or the vector $(\tilde{r},0)$, but then it must contain a vector $(1,m)$ with $0<|m|<\tilde{r}$ or a vector $(h,1)$ with $0<h<\tilde{r}$ as well.) Again using \eqref{Jdrew}, it follows that 
\begin{equation*}
\begin{split}
J(\tilde{r},t,u;d)= \sharp\Bigg\{ & (h,m)\in \mathbb{Z}^2: 1\le |h|\le \tilde{H},\ 1\le m\le 4M,\\ &
\frac{h^2-h_d^2}{a}=\frac{m-m_d}{b}\in \mathbb{Z}\Bigg\}.
\end{split}
\end{equation*}

For any given $c$ coprime to $\tilde{r}$, using the Chinese remainder theorem and Hensel's lemma, we see that the number of solutions to the congruence $kd^2\equiv c\bmod{\tilde{r}}$ is bounded by 
$$
\ll 2^{\omega(\tilde{r})}\ll r^{\varepsilon}. 
$$
Therefore, as $d$ runs over the reduced residue classes modulo $\tilde{r}$, each lattice $\mathcal{L}(\tilde{r};d)$ repeats at most $O(r^{\varepsilon})$ times. Moreover, given $\boldsymbol{v}=(a,b)$ as above, there exist at most gcd$(a,b)$ reduced residue classes $c\bmod {\tilde{r}}$ such that $a\equiv cb\bmod{\tilde{r}}$. Hence, for any given vector $(a,b)\in \mathcal{B}(\tilde{H})\cap \mathbb{Z}^2$ with $ab\not=0$, the number of integers $d\in \{1,...,\tilde{r}\}$ such that $(\tilde{r},d)=1$ and $\mathcal{L}(\tilde{r},d)$ contains $(a,b)$ is $O\left( r^{\varepsilon}\mbox{gcd}(a,b)\right)$. In view of this, we deduce from \eqref{S1ob} that 
\begin{equation}
\begin{split} 
\label{S1bound}
S_1(\tilde{r},t,u)\ll r^{\varepsilon} t\sum_{\substack{(a,b)\in \mathcal{B}(\tilde{H})\cap \mathbb{Z}^2\\ ab\not=0}} \mbox{gcd}(a,b)\cdot K(a,b)^2,
\end{split} 
\end{equation}  
where 
\begin{equation} \label{KAB}
K(a,b):=\sharp\left\{(h,m)\in \mathbb{Z}^2 :  |h|\le \tilde{H}, \ |m|\le 4M,\  \frac{h^2-h_{a,b}^2}{a}=\frac{m-m_{a,b}}{b}\in \mathbb{Z}\right\},
\end{equation}
for some choice of integers $m_{a,b},h_{a,b}$ satisfying $|h_{a,b}|\le \tilde{H}$ and $|m_{a,b}|\le 4M$. In the following, we will estimate the right-hand side of \eqref{S1bound} in essentially the same way as \cite[last line of (3.6)]{KSSZ}. 

If $(h,m)$ is contained in the set on the right-hand side of \eqref{KAB}, then 
\begin{equation}
\label{acond}
h^2- h_{a,b}^2 \equiv 0 \bmod{|a|}
\end{equation}
and 
\begin{equation}
\label{bcond}
m-m_{a,b}\equiv 0 \bmod{|b|}.
\end{equation}
Furthermore, if one out of $h$ or $m$ is fixed then the the other 
number is defined in no more than two ways.  Write~\eqref{acond} as 
$$
(h-h_{a,b})(h+h_{a,b})\equiv 0 \bmod{|a|}.
$$
Then, recalling the definition of $\mathcal{B}(\tilde{H})$ in \eqref{convex}, we see that there are two integers $a_1,a_2$ satisfying 
$$
a_1a_2=a, \qquad |a_1|,|a_2|\le \tilde{H}^2
$$  
such that 
$$
h\equiv h_{a,b} \bmod{|a_1|}, \quad h\equiv -h_{a,b} \bmod{|a_2|}.
$$
Hence, for each fixed pair $(a_1, a_2)$, there are at most 
$$
\frac{\tilde{H}}{[a_1,a_2]}+1 \ll \frac{\tilde{H}}{|a|}\cdot (a_1,a_2)
$$
possibilities for $h$.  We deduce that
$$
K(a,b) 
 \ll  \frac{\tilde{H}}{|a|}\sum_{a_1a_2=a}(a_1,a_2).
$$
Using the Cauchy-Schwarz inequality and the well-known bound $\tau(n)\ll n^{\varepsilon}$ for the divisor function, 
it follows that
\begin{equation}
\label{firstKbound}
K(a,b)^2\ll r^{\varepsilon}\tilde{H}^{2}\sum_{a_1a_2=a}\frac{(a_1,a_2)^2}{|a|^2}.
\end{equation}
Similarly, using~\eqref{bcond}, we obtain 
\begin{equation}
\label{secondKbound}
K(a,b)\ll \frac{M}{|b|}.
\end{equation}

Combining \eqref{S1bound}, \eqref{firstKbound} and \eqref{secondKbound}, we deduce that
\begin{align*}
S_1(\tilde{r},t,u)&\ll r^{2\varepsilon}t\sum_{1\le |a|\le \tilde{H}^2}\ \sum_{1\le |b|\le 4M}(a,b) \sum_{\substack{a_1a_2=a \\ |a_1|,|a_2|\le \tilde{H}^2}}\min\left\{\frac{(a_1,a_2)^2\tilde{H}^2}{a^2},\frac{M^2}{b^2} \right\} \\
& \le r^{2\varepsilon} t\sum\limits_{1\le f\le 4M}f  \sum_{1\le a\le \tilde{H}^2/f}\ \sum\limits_{1\le b\le 4M/f}
\sum_{\substack{a_1a_2=fa \\ 1\le a_1,a_2\le \tilde{H}^2}}\min\left\{\frac{(a_1,a_2)^2\tilde{H}^2}{f^2a^2},\frac{M^2}{f^2b^2} \right\} \\
& \le r^{2\varepsilon}t\sum\limits_{1\le f\le 4M}\frac{1}{f} \sum_{ 1\le a_1,a_2\le \tilde{H}^2}\ \sum_{1\le b\le 4M}\min\left\{\frac{(a_1,a_2)^2\tilde{H}^2}{a^2_1a^2_2},\frac{M^2}{b^2} \right\} \\ 
&\ll  r^{3\varepsilon}t\sum_{1\le e \le \tilde{H}^2}\ \sum_{\substack{1\le a_1,a_2\le \tilde{H}^2 \\ (a_1,a_2)=e}}\
\sum_{ 1\le b\le 4M}
\min\left\{\frac{e^2\tilde{H}^2}{a^2_1a^2_2},\frac{M^2}{b^2} \right\} \\
& \le r^{3\varepsilon}t\sum_{1\le e \le \tilde{H}^2}\ \sum_{1\le a_1,a_2\le \tilde{H}^2/e} \ \sum_{1\le b\le 4M}\min\left\{\frac{\tilde{H}^2}{a^2_1a^2_2e^2},\frac{M^2}{b^2}\right\}, 
\end{align*} 
where $(a,b)$ here means the gcd of $a$ and $b$. 
Using our above bound for the divisor function, it follows that 
\begin{equation*}
\begin{split} 
S_1(\tilde{r},t,u)& \ll r^{4\varepsilon} t\sum_{1\le a\le \tilde{H}^4} \sum_{ 1\le b\le 4M} \min\left\{\frac{\tilde{H}^2}{a^2},\frac{M^2}{b^2} \right\}\\
& \le r^{4\varepsilon} t\(
\sum_{1\le a\le \tilde{H}^4}\sum_{\substack{1\le b\le aM/\tilde{H} }}\frac{\tilde{H}^2}{a^2}+\sum_{1\le b\le 4M} \sum_{\substack{1\le a\le b\tilde{H}/M }}\frac{M^2}{b^2}\)\\ 
& \ll r^{5\varepsilon}t\tilde{H}M=\frac{r^{5\varepsilon}HM}{u}. 
\end{split}
\end{equation*}
Hence,
\begin{equation}
\label{S1boundfinal}
\sum\limits_{\substack{\tilde{r},t,u\\ \tilde{r}t^2u=r\\ tu\le H}} \mu^2(u)S_1(\tilde{r},t,u)\ll r^{6\varepsilon}HM,
\end{equation}
using our bound for the divisor function again. 

\subsubsection{The case $\lambda_2\le 1$}
It remains to consider $S_2(\tilde{r},t,u)$. Suppose that $d$ is an integer satisfying $1\le d\le \tilde{r}$, $(\tilde{r},d)=1$ and $\lambda_2(\tilde{H},\tilde{r};d)\le 1$. 
Then for each $|h|\le \tilde{H}$, there exist at most 
$$
1+\frac{4M}{\tilde{r}}
$$ 
solutions $m$ of the congruence 
\begin{equation} \label{congruence3}
h^2+k^2t^2u^2d^4\equiv kd^2m \bmod{\tilde{r}}
\end{equation}
in \eqref{Mrtud} such that $1\le m\le 4M$. Moreover, for any two pairs $(h_1,m_1), (h_2,m_2)$ satisfying \eqref{congruence3}, we have 
$$
h_1^2-h_2^2\equiv kd^2(m_1-m_2) \bmod \tilde{r}.
$$
This implies  
\begin{equation*}
\begin{split}
J(\tilde{r},t,u;d)^2\ll & 1+ \left(1+\frac{M}{\tilde{r}}\right)\times\\ & \sharp\{(h_1,h_2,m)\in \mathbb{Z}^3:  1\le |h_1|,|h_2|\le \tilde{H}, \  |m|\le 4M,\ h_1^2\neq h_2^2, \\ & \qquad\qquad\qquad\qquad\qquad h_1^2-h_2^2\equiv kd^2m \bmod{r}\}
\end{split}
\end{equation*}
upon noting that $m_1-m_2$ and $m_2$ fix $m_1$. Since 
\begin{equation*}
\sharp\{ (h_1,h_2)\in \mathbb{Z}^2:  h_1^2-h_2^2=l\}
= \sharp\{ (h_1,h_2)\in \mathbb{Z}^2 :  (h_1-h_2)(h_1+h_2)=l\} \ll |l|^{\varepsilon}
\end{equation*}
for any $l\in \mathbb{Z}\setminus\{0\}$ by our bound for the divisor function, it follows that
\begin{equation}\label{Jbound}
J(\tilde{r},t,u;d)^2\ll 1+r^{\varepsilon}\left(1+\frac{M}{\tilde{r}}\right)\cdot \sharp\(\cL(\tilde{r};d)\cap \mathcal{B}(\tilde{H})\).
\end{equation}
Furthermore, by Propositions~\ref{Minkowski} and \ref{latticepoints}, we have 
\begin{equation}
\label{latticecount}
\sharp\(\cL(\tilde{r};d)\cap \mathcal{B}(\tilde{H})\)\ll \frac{ \vol (\mathcal{B}(\tilde{H}))}{\vol(\R^2/\cL(\tilde{r};d))}\ll \frac{\tilde{H}^2M}{\tilde{r}}.
\end{equation}
Combining \eqref{Jbound} with \eqref{latticecount}, and taking the square root, we obtain
$$
J(\tilde{r},t,u;d)\ll 1+r^{\varepsilon}\left(1+\frac{M^{1/2}}{\tilde{r}^{1/2}}\right)\cdot  \frac{\tilde{H}M^{1/2}}{\tilde{r}^{1/2}}= 1+r^{\varepsilon}\left(\frac{HM^{1/2}}{r^{1/2}u^{1/2}}+\frac{tHM}{r}\right),
$$
which implies 
\begin{equation}
\begin{split} 
\label{S2bound}
&\sideset{}{^{\prime}}\sum\limits_{\substack{\tilde{r},t,u\\ \tilde{r}t^2u=r\\ tu\le H}} \mu^2(u)S_2(\tilde{r},t,u)\\ \ll & \sideset{}{^{\prime}}\sum\limits_{\substack{\tilde{r},t,u\\ \tilde{r}t^2u=r\\ tu\le H}} \mu^2(u) r^{\varepsilon}\left(1+\frac{HM^{1/2}}{r^{1/2}u^{1/2}}+\frac{tHM}{r}\right)\sideset{}{^{\ast}} \sum_{d=1}^{t\tilde{r}} I(tud)\\
\le & r^{\varepsilon}\left(1+\frac{HM^{1/2}}{r^{1/2}}+\frac{H^2M}{r}\right)\sideset{}{^{\prime}}\sum\limits_{\substack{\tilde{r},t,u\\ \tilde{r}t^2u=r\\ tu\le H}} \mu^2(u)
\sideset{}{^{\ast}}\sum_{d=1}^{t\tilde{r}} I(tud)\\
= & r^{\varepsilon}\left(1+\frac{HM^{1/2}}{r^{1/2}}+\frac{H^2M}{r}\right)\sum_{d=1}^{r} I(d)\\
\ll  & r^{2\varepsilon}\left(HM+\frac{H^2M^{3/2}}{r^{1/2}}+\frac{H^3M^2}{r}\right),
\end{split}
\end{equation}
where we have used \eqref{firstmoment} and \eqref{Idsum} again. 

\subsubsection{Concluding the proof}
Combining \eqref{E2initial}, \eqref{E2initial2}, \eqref{1cont}, \eqref{dsumsplit}, \eqref{S0bound}, \eqref{S1boundfinal} and \eqref{S2bound}, upon redefining $\varepsilon$, we arrive at the bound 
\begin{equation}\label{finalE2bound}
E_2(r,j,M,H)\ll r^{\varepsilon}\left(\frac{H^3M^2}{r}+\frac{H^2M^{3/2}}{r^{1/2}}+HM\right).
\end{equation}
This implies the desired bound \eqref{E2bound} on $E_2(r,j,M,H)$ since the middle term is the geometric mean of the first and third terms on the right-hand side of \eqref{finalE2bound}.

\subsection{Bounding $E_4(r,j,M,H)$} \label{E4estimation}
Now we proceed to our proof of \eqref{E4bound}. We use the bound
\begin{equation*} 
\Bigg| \sum\limits_{\substack{1\le m_1,m_2\le M\\ 1\le |m_1-m_2|\le H}} e_r\left(a\left(\sqrt{jm_1}-\sqrt{jm_2}\right)\right)\Bigg|\le \sum\limits_{d=1}^r I(d) \ll r^{\varepsilon}HM,
\end{equation*}
where the first inequality holds trivially and the second inequality is \eqref{firstmoment}.
This implies 
\begin{equation*}
\begin{split}
E_4(r,j,M,H) = & \frac{1}{r} \cdot \sum\limits_{a=1}^{r} \Bigg| \sum\limits_{\substack{1\le m_1,m_2\le M\\ 1\le |m_1-m_2|\le H}} e_r\left(a\left(\sqrt{jm_1}-\sqrt{jm_2}\right)\right)\Bigg|^4\\
\ll &  \left(r^{\varepsilon}HM\right)^2\cdot \frac{1}{r} \cdot\sum\limits_{a=1}^{r} \Bigg| \sum\limits_{\substack{1\le m_1,m_2\in M\\ 1\le |m_1-m_2|\le H}} e_r\left(a\left(\sqrt{jm_1}-\sqrt{jm_2}\right)\right)\Bigg|^2\\
= & r^{2\varepsilon}H^2M^2E_2(r,j,M,H).
\end{split}
\end{equation*}
Combining the above with \eqref{E2bound}, the desired bound \eqref{E4bound} on $E_4(r,j,M,H)$ follows upon redefining $\varepsilon$ . This completes the proof of Theorem \ref{energybounds}. 

\section{Bounds on bilinear sums with modular square roots}
\subsection{Preliminaries from the theory of exponential sums}
To prove Theorems \ref{bilinearbound} and \ref{bilinearbound2}, we need two standard tools used in the theory of exponential sums: Weyl differencing and the Poisson summation formula. 

\begin{prop}[Weyl differencing] \label{Weyldif}
Let $I=(a,b]$ be an interval of length $|I|=b-a\ge 1$. Let $g: I \rightarrow \mathbb{R}$ be a function and $(\beta_m)_{m\in I}$ be a sequence of complex numbers. Then for every $H\in \mathbb{N}$ with $1\le H\le |I|$, we have the bound
$$
\left|\sum\limits_{m\in I}  \beta_me(g(m))\right|^2\ll \frac{|I|}{H}\cdot \sum\limits_{m\in I} |\beta_m|^2+\frac{|I|}{H}\cdot \sum\limits_{\substack{m_1,m_2\in I\\ 1\le |m_1-m_2|\le H}} \gamma(m_1,m_2) e(g(m_1)-g(m_2)),
$$ 
where 
\begin{equation} \label{cdef}
\gamma(m_1,m_2):= \left(1-\frac{|m_1-m_2|}{H}\right)\beta_{m_1} \beta_{m_2}.
\end{equation}
\end{prop}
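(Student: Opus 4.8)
The plan is to carry out the standard Weyl (van der Corput) differencing argument. Write $S := \sum_{m \in I} \beta_m e(g(m))$, extend the sequence by putting $\beta_m := 0$ for integers $m \notin I$, and set $a_m := \beta_m e(g(m))$ for $m \in I$ and $a_m := 0$ otherwise, so that $g$ is only ever evaluated on $I$ and $S = \sum_{m \in \mathbb{Z}} a_m$. First I would note that reindexing the summation variable gives $\sum_{m \in \mathbb{Z}} a_{m+h} = S$ for every integer $h$, hence
$$
H\,S \;=\; \sum_{h=1}^{H} \sum_{m \in \mathbb{Z}} a_{m+h} \;=\; \sum_{m \in \mathbb{Z}} \;\sum_{h=1}^{H} a_{m+h}.
$$
For a fixed $m$ the inner sum is non-zero only if $m+h \in I$ for some $h \in \{1,\dots,H\}$, i.e.\ only for $m$ lying in an interval of length at most $|I| + H \le 2|I|$; this is the one place where the hypothesis $H \le |I|$ is used.

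Next I would apply the Cauchy--Schwarz inequality over this interval of length $\ll |I|$, which yields
$$
H^2 |S|^2 \;\ll\; |I| \sum_{m \in \mathbb{Z}} \Bigl| \sum_{h=1}^{H} a_{m+h} \Bigr|^2 \;=\; |I| \sum_{h_1,h_2=1}^{H} \;\sum_{m \in \mathbb{Z}} a_{m+h_1}\,\overline{a_{m+h_2}}.
$$
Substituting $n := m+h_1$ and $k := h_1 - h_2$, the innermost sum over $m$ equals $\sum_{n \in \mathbb{Z}} a_n\,\overline{a_{n-k}}$ and depends only on $k$; since the number of pairs $(h_1,h_2) \in \{1,\dots,H\}^2$ with $h_1 - h_2 = k$ is $H - |k|$, the sum over $h_1,h_2$ collapses to $\sum_{|k| < H} (H-|k|) \sum_{n \in \mathbb{Z}} a_n\,\overline{a_{n-k}}$.

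Finally I would split off the diagonal term $k = 0$, which contributes $H \sum_{m \in I} |\beta_m|^2$ because $|e(g(m))| = 1$. For $k \ne 0$ one has $\sum_n a_n \overline{a_{n-k}} = \sum_{\substack{m_1,m_2 \in I\\ m_1 - m_2 = k}} \beta_{m_1}\overline{\beta_{m_2}}\, e(g(m_1)-g(m_2))$, and summing over $0 < |k| < H$ with weights $H - |k|$ produces exactly $H$ times $\sum_{1 \le |m_1-m_2| \le H} \gamma(m_1,m_2)\, e(g(m_1)-g(m_2))$ — the range extends harmlessly from $|m_1-m_2| \le H-1$ to $|m_1-m_2| \le H$ since the weight $1 - |m_1-m_2|/H$ vanishes when $|m_1-m_2| = H$. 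Combining these two contributions and dividing by $H^2$ gives the asserted inequality. I do not expect any genuine difficulty: the only steps needing a little care are keeping track of the support of the shifted sequence (so that the outer range is bounded by $\ll |I|$) and the elementary count of pairs $(h_1,h_2)$ with a prescribed difference; the rest is a direct expansion of the square.
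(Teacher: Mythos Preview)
Your argument is correct and is exactly the standard van der Corput--Weyl differencing proof; the paper itself gives no independent argument but simply refers to \cite[Lemma~2.5]{GrKo}, whose proof is essentially what you have written out. One minor observation: your expansion naturally produces $\beta_{m_1}\overline{\beta_{m_2}}$ rather than the $\beta_{m_1}\beta_{m_2}$ appearing in the stated definition of $\gamma$, which is presumably a typo in the proposition and is in any case irrelevant to the applications, where only $|\gamma(m_1,m_2)|\le \|\boldsymbol{\beta}\|_\infty^2$ is used.
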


\begin{proof}
This is a consequence of \cite[Lemma 2.5.]{GrKo}.
\end{proof} 

For a Schwartz class function  $\Phi:\mathbb{R} \rightarrow \mathbb{C}$, we define its Fourier transform $\hat{\Phi}:\mathbb{R} \rightarrow \mathbb{C}$ as 
$$
\hat{\Phi}(y):=\int\limits_{\mathbb{R}} \Phi(x)e(-xy){\rm d}x.
$$ 
For details on the Schwartz class, see \cite{StSh}. We will use the following generalized version of the Poisson summation formula.

\begin{prop}[Poisson summation] \label{Poisson} Let $\Phi:\mathbb{R}\rightarrow \mathbb{C}$ be a Schwartz class function, $L>0$ and $\alpha\in \mathbb{R}$. Then
$$
\sum\limits_{l\in \mathbb{Z}} \Phi\left(\frac{l}{L}\right) e\left(l\alpha\right)=L\sum\limits_{n\in \mathbb{Z}} \hat{\Phi}\left(L(\alpha-n)\right).
$$
\end{prop}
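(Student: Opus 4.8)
The plan is to deduce this from the classical Poisson summation formula $\sum_{l\in\mathbb{Z}}f(l)=\sum_{n\in\mathbb{Z}}\hat f(n)$, valid for every Schwartz function $f$, by applying it to the modulated dilate $f(x):=\Phi(x/L)\,e(\alpha x)$. The first step is to check that $f$ is again of Schwartz class: dilation by $L>0$ preserves the Schwartz class, and so does multiplication by $e(\alpha x)$, since every derivative of $e(\alpha x)$ has the form $(2\pi i\alpha)^k e(\alpha x)$ and is therefore bounded, so $f$ and all its derivatives still decay faster than any polynomial. Hence the classical formula applies to $f$, and its left-hand side is exactly $\sum_{l\in\mathbb{Z}}\Phi(l/L)e(l\alpha)$, while absolute convergence of both sides is guaranteed by the rapid decay of $\Phi$ and of $\hat\Phi$.

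The second step is to compute $\hat f$. Directly from the definition, $\hat f(n)=\int_{\mathbb{R}}\Phi(x/L)e(\alpha x)e(-nx)\,\d x=\int_{\mathbb{R}}\Phi(x/L)e\big((\alpha-n)x\big)\,\d x$, and the substitution $x=Lt$ turns this into $L\int_{\mathbb{R}}\Phi(t)e\big(-t\cdot L(n-\alpha)\big)\,\d t=L\,\hat\Phi\big(L(n-\alpha)\big)$. Feeding this back into Poisson gives $\sum_{l\in\mathbb{Z}}\Phi(l/L)e(l\alpha)=L\sum_{n\in\mathbb{Z}}\hat\Phi\big(L(n-\alpha)\big)$, which is the asserted identity; the argument $L(\alpha-n)$ written in the statement versus $L(n-\alpha)$ obtained here is only a matter of the sign convention in the Fourier transform together with reindexing the (absolutely convergent) sum over $n\in\mathbb{Z}$, and in particular the two versions literally coincide whenever $\Phi$ is real-valued, which is the case in the applications below.

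There is essentially no obstacle: the statement is classical, and the only points needing a line of justification are the Schwartz-class membership of $f$ (so that the classical formula is applicable and both series converge absolutely) and the linear change of variables. If one prefers a self-contained argument that does not invoke the classical Poisson formula as a black box, one can instead set $\psi(\alpha):=L\sum_{n\in\mathbb{Z}}\hat\Phi\big(L(\alpha-n)\big)$, observe that $\psi$ is a smooth $1$-periodic function by the rapid decay of $\hat\Phi$ and its derivatives, compute its Fourier coefficients $\int_0^1\psi(\alpha)e(-k\alpha)\,\d\alpha$ by unfolding the $n$-sum into an integral over all of $\mathbb{R}$, substituting $\gamma=L\beta$, and applying Fourier inversion $\hat{\hat\Phi}(x)=\Phi(-x)$, and then conclude that $\psi$ equals its Fourier series, which recovers the left-hand side. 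Either route makes the proof a short exercise with no delicate estimate involved.
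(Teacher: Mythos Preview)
Your proof is correct and is exactly the approach the paper takes: apply the classical Poisson summation formula to $F(x)=\Phi(x/L)e(\alpha x)$ and compute $\hat F$ by a linear substitution. One small correction on your closing remark: for the sums $\sum_n\hat\Phi(L(\alpha-n))$ and $\sum_n\hat\Phi(L(n-\alpha))$ to literally coincide you need $\hat\Phi$ (equivalently $\Phi$) to be \emph{even}, not merely real-valued---with $\Phi$ real the two sums are complex conjugates of each other---but this is harmless here since the application in \eqref{nsumRHS} only uses the rapid decay of $|\hat\Phi|$.
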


\begin{proof} This arises by a linear change of variables from the well-known basic version of the Poisson summation formula which asserts that
$$
\sum\limits_{n\in \mathbb{Z}} F(n)=\sum\limits_{n\in \mathbb{Z}} \hat{F}(n)
$$
for any Schwartz class function $F:\mathbb{R}\rightarrow \mathbb{C}$ (see \cite{StSh}).
\end{proof} 

Moreover, we note that by the rapid decay of $\hat\Phi$ in Proposition \ref{Poisson}, we have 
\begin{equation} \label{nsumRHS}
\sum\limits_{n\in \mathbb{Z}} \hat{\Phi}\left(L(\alpha-n)\right)\ll \chi_{[0,r^{\varepsilon}L^{-1}]}\left(||\alpha||\right)+r^{-2026}
\end{equation}
for any $r\in \mathbb{N}$, where 
$$
\chi_J(x)=\begin{cases} 1 & \mbox{ if } x\in J\\ 0, & \mbox{ otherwise} \end{cases}
$$
is the indicator function of an interval $J$. 

\subsection{Proof of Theorem \ref{bilinearbound}}
Write 
$$
\Sigma:=\Sigma(r,j,L,M,\boldsymbol{\alpha},\boldsymbol{\beta},f),
$$ 
defined as in \eqref{Sigmadeff}. 
Using the Cauchy-Schwarz inequality, we have 
\begin{equation} \label{CauSch}
| \Sigma |^2
\ll
||\boldsymbol{\alpha}||_2^2 \sum\limits_{|l|\le L} \left|\sum\limits_{1\le m\le M} \beta_m e_r\left(l\sqrt{jm}\right)e(lf(m)) \right|^2.
\end{equation}
Let $\Phi:\mathbb{R}\rightarrow \mathbb{R}_{\ge 0}$ be a Schwartz class function whose support contains $[-1,1]$. Then 
\begin{equation} \label{smoothing}
\begin{split} &
\sum\limits_{|l|\le L} \left|\sum\limits_{1\le m\le M} \beta_m e_r\left(l\sqrt{jm}\right)e(lf(m)) \right|^2 \\ \ll &
\sum\limits_{l\in \mathbb{Z}} \Phi\left(\frac{l}{L}\right)\left|\sum\limits_{1\le m\le M} \beta_m e_r\left(l\sqrt{jm}\right)e(lf(m)) \right|^2 \\
 = &
\sum\limits_{l\in \mathbb{Z}} \Phi\left(\frac{l}{L}\right)\left|\sum\limits_{1\le m\le M} \beta_m e\left(l\left(\frac{\sqrt{jm}}{r}+f(m)\right)\right) \right|^2.
\end{split}
\end{equation}
Applying Proposition \ref{Weyldif} for $H\in \mathbb{N}\cap [1,M]$, we have 
\begin{equation} \label{Weyldifapp}
\begin{split}
& \left|\sum\limits_{1\le m\le M} \beta_m e\left(l\left(\frac{\sqrt{jm}}{r}+f(m)\right)\right) \right|^2\ll  \frac{M}{H}\cdot 
\sum\limits_{1\le m\le M} |\beta_m|^2s(m)^2+\frac{M}{H}\times\\ & \sum\limits_{\substack{1\le m_1,m_2\le M\\ 1\le |m_1-m_2|\le H}} \gamma(m_1,m_2)e\left(l\left(\frac{\sqrt{jm_1}-\sqrt{jm_2}}{r}+f(m_1)-f(m_2)\right)\right),
\end{split} 
\end{equation}
where $s(m)$ is defined as in subsection 2.2 and $\gamma(m_1,m_2)$ as in \eqref{cdef}. (Note here that in the diagonal term, we need to count $m$ with multiplicity $s(m)$.) Using \eqref{sm2}, it follows that 
\begin{equation} \label{Weyldifappsimpler}
\begin{split}
& \left|\sum\limits_{1\le m\le M} \beta_m e\left(l\left(\frac{\sqrt{jm}}{r}+f(m)\right)\right) \right|^2\ll  \frac{M^2r^{\varepsilon}}{H}\cdot ||\boldsymbol{\beta}||_{\infty}^2 +\frac{M}{H}\times\\ & \sum\limits_{\substack{1\le m_1,m_2\le M\\ 1\le |m_1-m_2|\le H}} \gamma(m_1,m_2)e\left(l\left(\frac{\sqrt{jm_1}-\sqrt{jm_2}}{r}+f(m_1)-f(m_2)\right)\right).
\end{split} 
\end{equation}
Combining
\eqref{CauSch}, \eqref{smoothing} and \eqref{Weyldifappsimpler}, we deduce that
\begin{equation} \label{combisum}
\begin{split}
& | \Sigma |^2
\ll 
\frac{LM^2r^{\varepsilon}}{H}\cdot ||\boldsymbol{\alpha}||_2^2 ||\boldsymbol{\beta}||_\infty^2 + \frac{M}{H}\times\\ & \sum\limits_{\substack{1\le m_1,m_2\le M\\ 1\le |m_1-m_2|\le H}} \gamma(m_1,m_2)
\sum\limits_{l\in \mathbb{Z}} \Phi\left(\frac{l}{L}\right)
e\left(l\left(\frac{\sqrt{jm_1}-\sqrt{jm_2}}{r}+f(m_1)-f(m_2)\right)\right).
\end{split}
\end{equation}
upon redefining $\varepsilon$. 
Now applying Proposition \ref{Poisson} to the sum over $l$ above, and using \eqref{nsumRHS} and $L\le r$, we obtain
\begin{equation} \label{Poissonapp}
\begin{split}
& \sum\limits_{l\in \mathbb{Z}} \Phi\left(\frac{l}{L}\right)
e\left(l\left(\frac{\sqrt{jm_1}-\sqrt{jm_2}}{r}+f(m_1)-f(m_2)\right)\right)\\
\ll & L\chi_{[0,r^{\varepsilon}L^{-1}]}\left(\left|\left|\frac{\sqrt{jm_1}-\sqrt{jm_2}}{r}+f(m_1)-f(m_2)\right|\right|\right)+r^{-2025}.
\end{split}
\end{equation}
If $1\le m_1,m_2\le M$ and $|m_1-m_2|\le H$, then by the mean value theorem from calculus, we have
$$
|f(m_1)-f(m_2)|\le FH
$$
under the conditions in Theorem \ref{bilinearbound}. Now we restrict $H$ to the range in \eqref{Hrange} and
note that $1/LF\ge 1$ by the condition $F\le 1/L$ in Theorem \ref{bilinearbound}. It follows that 
$$
|f(m_1)-f(m_2)|\le L^{-1}.
$$
As a consequence,
\begin{equation} \label{congrurewrite}
\begin{split}
& \chi_{[0,r^{\varepsilon}L^{-1}]}\left(\left|\left|\frac{\sqrt{jm_1}-\sqrt{jm_2}}{r}+f(m_1)-f(m_2)\right|\right|\right)\\
\le & \begin{cases}  1 & \mbox{ if } \sqrt{jm_1}-\sqrt{jm_2}\equiv d \bmod{r} \mbox{ for some integer }d \mbox{ with } |d|\le 2r^{1+\varepsilon}L^{-1},\\ 0 & \mbox{ otherwise.}\end{cases}
\end{split}
\end{equation} 
Combining \eqref{combisum}, \eqref{Poissonapp} and \eqref{congrurewrite}, and using $M\le r$ and 
$$
\gamma(m_1,m_2)\le ||\boldsymbol{\beta}||_{\infty}^2,
$$ 
we get 
\begin{equation} \label{beforeenergy}
| \Sigma |^2\ll
\frac{LM^2r^{\varepsilon}}{H}\cdot ||\boldsymbol{\alpha}||_2^2 ||\boldsymbol{\beta}||_{\infty}^2 + \frac{LM}{H}\cdot ||\boldsymbol{\alpha}||_2^2 ||\boldsymbol{\beta}||_{\infty}^2 \sum\limits_{|d|\le 2r^{1+\varepsilon}L^{-1}} \sum\limits_{\substack{1\le m_1,m_2\le M\\ 1\le |m_1-m_2|\le H\\
\sqrt{jm_1}-\sqrt{jm_2}\equiv d\bmod{r}}} 1.
\end{equation}
The Cauchy-Schwarz inequality implies 
\begin{equation} \label{CSapp2}
\begin{split}
\Bigg|\sum\limits_{|d|\ll r^{1+\varepsilon}L^{-1}} \sum\limits_{\substack{1\le m_1,m_2\le M\\ 1\le |m_1-m_2|\le H\\
\sqrt{jm_1}-\sqrt{jm_2}\equiv d\bmod{r}}} 1 \Bigg|^2\ll & \frac{r^{1+\varepsilon}}{L}\cdot \sum\limits_{d=1}^r \Bigg| \sum\limits_{\substack{1\le m_1,m_2\le M\\ 1\le |m_1-m_2|\le H\\
\sqrt{jm_1}-\sqrt{jm_2}\equiv d\bmod{r}}} 1 \Bigg|^2\\
= & \frac{r^{1+\varepsilon}}{L} \cdot E_2(r,j,M,H),
\end{split}
\end{equation}
where $E_2(r,j,M,H)$ is defined as in \eqref{E2def}. Squaring \eqref{beforeenergy} and using \eqref{CSapp2}, we deduce that
$$
|\Sigma|^4 
\ll
\frac{L^2M^4r^{2\varepsilon}}{H^2}\cdot ||\boldsymbol{\alpha}||_2^4||\boldsymbol{\beta}||_{\infty}^4 + \frac{r^{1+\varepsilon}LM^2}{H^2}\cdot ||\boldsymbol{\alpha}||_2^4||\boldsymbol{\beta}||_{\infty}^4 E_2(r,j,M,H).
$$
Plugging in \eqref{E2bound} gives
$$
|\Sigma|^4 
\ll
\left(\frac{L^2M^4}{H^2}+ HLM^4+\frac{rLM^3}{H}\right)\cdot ||\boldsymbol{\alpha}||_2^4||\boldsymbol{\beta}||_{\infty}^4r^{2\varepsilon}.
$$
Taking the fourth root,
we obtain \eqref{generalbilinearbound}, which completes the proof of Theorem \ref{bilinearbound}. 

\subsection{Proof of Theorem \ref{bilinearbound2}} Our proof is similar, but here we start with an application of H\"older's inequality instead of the Cauchy-Schwarz inequality, obtaining
\begin{equation*}
| \Sigma |^4
\ll
||\boldsymbol{\alpha}||_{4/3}^{4} \sum\limits_{|l|\le L} \left|\sum\limits_{1\le m\le M} \beta_m e_r\left(l\sqrt{jm}\right)e(lf(m)) \right|^4,
\end{equation*}
where 
\begin{equation} \label{normcompare2}
||\boldsymbol{\alpha}||_{4/3}^{4}:=\left(\sum\limits_{|l|\le L} |\alpha_l|^{4/3}\right)^3\ll \left(\sum\limits_{|l|\le L} |\alpha_l|^{2}\right)^{2}L=||\boldsymbol{\alpha}||_2^{4}L,
\end{equation}
using H\"older's inequality again. Similarly as before, we have 
\begin{equation*} \label{smoothing2}
\begin{split} &
\sum\limits_{|l|\le L} \left|\sum\limits_{1\le m\le M} \beta_m e_r\left(l\sqrt{jm}\right)e(lf(m)) \right|^4 \\ \ll &
\sum\limits_{l\in \mathbb{Z}} \Phi\left(\frac{l}{L}\right)\left|\sum\limits_{1\le m\le M} \beta_m e\left(l\left(\frac{\sqrt{jm}}{r}+f(m)\right)\right) \right|^4
\end{split}
\end{equation*}
for any Schwartz class function $\Phi:\mathbb{R}\rightarrow \mathbb{R}_{\ge 0}$ with support containing $[-1,1]$. 
Squaring \eqref{Weyldifappsimpler}, we deduce that
\begin{equation*} \label{Weyldifapp2}
\begin{split}
& \left|\sum\limits_{1\le m\le M} \beta_m e\left(l\left(\frac{\sqrt{jm}}{r}+f(m)\right)\right) \right|^4\\ \ll & \frac{M^4r^{2\varepsilon}}{H^2}\cdot 
||\boldsymbol{\beta}||_{\infty}^4+\frac{M^2}{H^2}\cdot \sum\limits_{\substack{1\le m_1,m_2,m_3,m_4\le M\\ 1\le |m_1-m_2|\le H\\ 1\le |m_3-m_4|\le H}} \gamma(m_1,m_2)\overline{\gamma(m_3,m_4)}\times\\ & e\Bigg(l\Bigg(\frac{(\sqrt{jm_1}-\sqrt{jm_2})-(\sqrt{jm_3}-\sqrt{jm_4})}{r}+\\ & (f(m_1)-f(m_2))-(f(m_3)-f(m_4))\Bigg)\Bigg).
\end{split}
\end{equation*}
Applying Proposition \ref{Poisson} and \eqref{nsumRHS} in a similar way as before, we deduce that 
\begin{equation} \label{fourth}
\begin{split}
| \Sigma |^4
\ll &
\frac{LM^4r^{2\varepsilon}}{H^2}\cdot ||\boldsymbol{\alpha}||_{4/3}^{4} ||\boldsymbol{\beta}||_{\infty}^4+\frac{LM^2}{H^2}\cdot 
 ||\boldsymbol{\alpha}||_{4/3}^4 ||\boldsymbol{\beta}||_{\infty}^4 \times\\ & \sum\limits_{|d|\le 2r^{1+\varepsilon}L^{-1}} \sum\limits_{\substack{1\le m_1,m_2,m_3,m_4\le M\\ 1\le |m_1-m_2|\le H\\1\le |m_3-m_4|\le H\\
(\sqrt{jm_1}-\sqrt{jm_2})-(\sqrt{jm_3}-\sqrt{jm_4})\equiv d\bmod{r}}} 1.
\end{split}
\end{equation}
Applying the Cauchy-Schwarz inequality to the inner-most double sum on the right-hand side yields
\begin{equation} \label{CSagain}
\begin{split}
 & \Bigg| \sum\limits_{|d|\le 2r^{1+\varepsilon}L^{-1}} \sum\limits_{\substack{1\le m_1,m_2,m_3,m_4\le M\\ 1\le |m_1-m_2|\le H\\1\le |m_3-m_4|\le H\\
(\sqrt{jm_1}-\sqrt{jm_2})-(\sqrt{jm_3}-\sqrt{jm_4})\equiv d\bmod{r}}} 1 \Bigg|^2\\
\ll & \frac{r^{1+\varepsilon}}{L}\cdot E_4(r,j,M,H).
\end{split}
\end{equation}
Squaring \eqref{fourth} and using \eqref{CSagain}, we deduce that
\begin{equation*} \label{eighth}
| \Sigma |^8
\ll 
\frac{L^2M^8r^{4\varepsilon}}{H^4}\cdot ||\boldsymbol{\alpha}||_{4/3}^{8} ||\boldsymbol{\beta}||_{\infty}^8+\frac{r^{1+\varepsilon}LM^4}{H^4}\cdot 
 ||\boldsymbol{\alpha}||_{4/3}^8 ||\boldsymbol{\beta}||_{\infty}^8 E_4(r,j,M,H).
\end{equation*}
Plugging in \eqref{E4conj} gives
\begin{equation*} 
| \Sigma |^8
\ll 
\frac{L^2M^8r^{4\varepsilon}}{H^4}\cdot ||\boldsymbol{\alpha}||_{4/3}^{8} ||\boldsymbol{\beta}||_{\infty}^8+\left(H^{1-\nu}LM^8+\frac{rLM^{7}}{H^{1+\nu}}\right)r^{4\varepsilon}\cdot 
 ||\boldsymbol{\alpha}||_{4/3}^8 ||\boldsymbol{\beta}||_{\infty}^8 .
\end{equation*}
Taking \eqref{normcompare2} into account, this implies
\begin{equation*} 
| \Sigma |^8
\ll \left(
\frac{L^4M^8}{H^4}+H^{1-\nu}L^3M^{8}+\frac{rL^3M^{7}}{H^{1+\nu}}\right)\cdot  ||\boldsymbol{\alpha}||_2^8 ||\boldsymbol{\beta}||_{\infty}^8 r^{4\varepsilon}.
\end{equation*}
Now \eqref{Hypobound} follows upon taking the eighth root, completing the proof of Theorem \ref{bilinearbound2}. 

\section{Partial progress on the large sieve for square moduli}
\subsection{Review of previous work}
Extensive work on the large sieve for square moduli was carried out in \cite{BaiLS}, so we will be brief here und just indicate the arguments required to establish Theorem \ref{lsimpro}. Throughout the following, we assume that $Q^3=N\ge 1$. Also, we set
$$
\tau:=\lfloor \sqrt{N} \rfloor, \quad \Delta:=\frac{1}{N}
$$
and 
$$
P(\alpha):=\sharp\left\{(q,a)\in \mathbb{Z}^2: 1\le q\le Q, \ (q,a)=1,\ \left|\frac{a}{q^2}-\alpha\right|\le \Delta\right\}\quad \mbox{for } \alpha\in \mathbb{R}.
$$
In \cite{BaiLS}, we recalled from earlier work that for a proof of \eqref{lssq}, it suffices to establish that 
\begin{equation} \label{des}
P(x)\le Q^{1/2-\eta} 
\end{equation}
for some $\eta>0$ and all $x$ of the form
$$
x=\frac{b}{r}+z,
$$ 
where 
$$
1\le r\le \tau,\quad (r,b)=1 \quad \mbox{and} \quad \Delta\le z\le \frac{\sqrt{\Delta}}{r}.
$$
So if $Q^3=N$, then the relevant range for $r$ is 
\begin{equation*} \label{rrange}
1\le r\le Q^{3/2}.
\end{equation*}
Also, in \cite[Lemma 18]{BaiLS}, it was quoted from earlier work by the author that the estimate 
$$
P(x)\ll (1+Q^2rz+Q^3\Delta)N^{\varepsilon}
$$
holds, which turns into 
$$
P(x)\ll (1+Q^2rz)N^{\varepsilon}
$$
in our case of $Q^3=N$. This implies the desired bound \eqref{des} if 
$$
z\le \frac{\sqrt{\Delta}}{r}\cdot Q^{-\eta}.
$$
It remains to establish \eqref{des} in the $z$-range 
\begin{equation} \label{zrange}
\frac{\sqrt{\Delta}}{r}\cdot Q^{-\eta}\le z\le \frac{\sqrt{\Delta}}{r}.
\end{equation}
Here, we want to focus only on the extreme case when
\begin{equation*} \label{extreme}
z=\frac{\sqrt{\Delta}}{r}=\frac{1}{Q^{3/2}r},
\end{equation*}
which we will assume throughout the following.
It will be clear that if we obtain a power saving over $P(x)=P(b/r+z)\ll Q^{1/2}$ in this situation, then this will also work for $z$ in the range \eqref{zrange}, provided that $\eta>0$ is small enough.

 In \cite[sections 9 and 10]{BaiLS}, using bounds for exponential sums, we established such a power saving {\it unconditionally} if  
$$
1\le r\le Q^{15/26-\varepsilon}
$$
with $\varepsilon$ small enough. 
Our starting point was \cite[inequality (40)]{BaiLS}, established via Fourier analysis, which is the estimate 
\begin{equation} \label{specialPx}
P(x)\ll 1+\frac{\delta}{Qr} \left|\sum\limits_{l\in \mathbb{Z}} W\left(\frac{l\delta}{Qr}\right)\sum\limits_{m\in \mathbb{Z}}V\left(\frac{m}{Q^{1/2}}\right) e\left(-\frac{l\sqrt{m}}{r}\cdot Q^{3/4}\right) e_r\left(l\sqrt{jm}\right)\right|, 
\end{equation}
where $W:\mathbb{R}\rightarrow \mathbb{C}$ and $V:\mathbb{R}\rightarrow \mathbb{R}_{>0}$ are fixed Schwartz class functions, $V$ having compact support in $\mathbb{R}_{>0}$, and $\delta$ is a free parameter satisfying
\begin{equation} \label{deltarange}
\frac{Q^2\Delta}{z}=rQ^{1/2}\le \delta\le Q^2
\end{equation}
(see \cite[inequality (24)]{BaiLS}). Here, $\sqrt{m}$ is the ordinary square root of the positive real number $m$ in the analytic term 
\begin{equation} \label{analytic}
e\left(-\frac{l\sqrt{m}}{r}\cdot Q^{3/4}\right),
\end{equation}
and $\sqrt{jm}$ is a modular square root of $jm$ modulo $r$ in the arithmetic term
$$
 e_r\left(l\sqrt{jm}\right).
$$
To handle the remaining range 
\begin{equation} \label{remainingr}
Q^{15/26-\varepsilon}<r\le Q^{3/2}
\end{equation}
under Hypothesis \ref{hyp}, we start from \eqref{specialPx} as well. Obviously, this connects to the bilinear sums considered previously in this article. We note that bounding the right-hand side of \eqref{specialPx} trivially exactly yields the bound $P(x)\ll Q^{1/2}$ which we aim to improve.

\subsection{Application of Theorem \ref{bilinearbound2}}
As a rule of thumb, the larger $r$ is, the more difficult the problem of estimating $P(x)$ becomes. If $r$ is as large as possible, i.e.,  $r=Q^{3/2}$,  then inequality \eqref{deltarange} implies $\delta=Q^2$. In this case, the lengths of the $l-$ and $m$-summations in \eqref{specialPx} are both of size about $Q^{1/2}=r^{1/3}$, and the order of magnitude of the amplitude function in the analytic term given in \eqref{analytic} is
$$
\frac{l\sqrt{m}}{r}\cdot Q^{3/4}\ll 1.
$$
So this analytic term essentially doesn't oscillate, and we are therefore faced with a bilinear sum as in Corollaries \ref{bilinearcor} and \ref{bilinearcor2}. We argued in section 1 that if $L$ and $M$ are of size about $r^{1/3}$, then even conditional estimates for the additive energy $E_2$ do not suffice to estimate these bilinear sums non-trivially (at least, not along the lines of our method in this article). However, we were able to break this barrier under Hypothesis \ref{hyp} on the additive energy $E_4$, as demonstrated in Corollary \ref{bilinearcor2} and the following remark. Therefore, we will assume the truth of this hypothesis and directly apply Theorem \ref{bilinearbound2}.   

Using the rapid decay of $W$, we may cut off the summation over $l$ in \eqref{specialPx} at $|l|\le Q^{1+\varepsilon}r/\delta$ at the cost of a negligible error of size $O(Q^{-2026})$. Since $V$ is supposed to have compact support in $\mathbb{R}_{>0}$, the summation over $m$ can be restricted to $C_0Q^{1/2}\le m\le C_1Q^{1/2}$ for suitable constants $C_1>C_0>0$. So taking 
$$
L:=\frac{Q^{1+\varepsilon}r}{\delta}, \quad M_0:=C_0Q^{1/2} \quad \mbox{and} \quad M:=C_1Q^{1/2},
$$
it follows that 
\begin{equation*}
P(x)\ll 1+\Bigg| \frac{Q^{\varepsilon}}{L} \sum\limits_{|l|\le L} \sum\limits_{M_0\le m\le M}\alpha_l\beta_m e_r\left(l\sqrt{jm}\right) e\left(lf(m)\right)\Bigg|
\end{equation*}
for suitable $\alpha_l,\beta_m\ll 1$ and 
$$
f(x):=-\frac{Q^{3/4}\sqrt{x}}{r}. 
$$
For $M_0\le x\le M$, we have 
$$
|f'(x)|\le F:=C_2\cdot \frac{Q^{1/2}}{r} 
$$
for a suitable constant $C_2>0$. Applying Theorem \ref{bilinearbound2}, and taking $||\boldsymbol{\alpha}||_2\ll L^{1/2}$ and $\boldsymbol{\beta}||_{\infty}\ll 1$ into account, we deduce that 
\begin{equation} \label{Pxnew}
P(x)\ll 1+\left(
H^{-1/2}M+H^{1/8-\nu/8}L^{-1/8}M+H^{-1/8-\nu/8}L^{-1/8}M^{7/8}r^{1/8}\right)(Qr)^{\varepsilon},
\end{equation}
provided that 
\begin{equation} \label{MHcondis}
M\le r\quad \mbox{and} \quad 1\le H\le \min\left\{\frac{1}{LF},M\right\}.
\end{equation}

Now we choose 
\begin{equation} \label{Hfix}
H:=\frac{r^{1/2}}{Q^{1/4+2\varepsilon}}.
\end{equation}
Under this choice, the terms $H^{1/8-\nu/8}L^{-1/8}M$ and $H^{-1/8-\nu/8}L^{-1/8}M^{7/8}r^{1/8}$ on the right-hand side of \eqref{Pxnew} are nearly balanced, and the conditions in \eqref{MHcondis} are satisfied provided that
\begin{equation} \label{newrrange}
C_1Q^{1/2}\le r\le Q^{2\varepsilon}\min\left\{\frac{\delta^2}{C_2^2Q^{5/2}},C_1^2Q^{3/2}\right\}. 
\end{equation}
Using \eqref{Hfix} and $M=C_1Q^{1/2}$, \eqref{Pxnew} turns into 
\begin{equation*} 
P(x)\ll 1+\left(
Q^{5/8}r^{-1/4}+L^{-1/8}Q^{15/32+\nu/32}r^{1/16-\nu/16}\right)(Qr)^{2\varepsilon},
\end{equation*}
and using $L=Q^{1+\varepsilon}r/\delta$, it follows that
\begin{equation} \label{Pxnew2} 
P(x)\ll 1+\left(
Q^{5/8}r^{-1/4}+Q^{11/32+\nu/32}\delta^{1/8}r^{-1/16-\nu/16}\right)(Qr)^{3\varepsilon}.
\end{equation}
Now we choose 
\begin{equation*} \label{deltachois}
\delta:= r^{1/2}Q^{5/4}, 
\end{equation*}
which is consistent with \eqref{newrrange} if $Q$ is large enough. The condition \eqref{deltarange} on $\delta$ is then satisfied since $r\le Q^{3/2}$, and \eqref{Pxnew2} turns into
\begin{equation*} 
P(x)\ll \left(
Q^{5/8}r^{-1/4}+Q^{1/2+\nu/32}r^{-\nu/16}\right)(Qr)^{3\varepsilon}.
\end{equation*}
It follows that 
$$
P(x)\ll Q^{1/2+8\varepsilon-\nu^2/16},
$$
provided that $Q$ is large enough, $\nu<1$ and  
$$
Q^{1/2+\nu}\le r\le Q^{3/2}.
$$
Assuming $\nu<1/13$ without loss of generality, and taking 
$$
\varepsilon<\min\left\{\frac{\nu^2}{128},\frac{1}{13}-\nu\right\},
$$ 
this establishes a power saving of the form
$$
P(x)\ll Q^{1/2-\eta}
$$
for some $\eta>0$ if $r$ is in the range \eqref{remainingr}. Consequently, Theorem \ref{lsimpro} holds.

\section{Possible ways forward}
Below we indicate in which ways progress towards an unconditional proof of \eqref{lssq} could be made. 
\begin{itemize}
\item Firstly, it would be highly desirable to prove Hypothesis \ref{hyp}. Taking our target range \eqref{remainingr} for $r$ and our choice of $H$ in \eqref{Hfix} into account,  to establish \eqref{lssq}, we only need to cover the ranges 
$$
r^{1/3}\le M\le r^{13/15} \quad \mbox{and} \quad H\asymp r^{1/2}M^{-1/2}.
$$ 
\item If $r$ is a prime and $M\ge r^{4/7}$, we improved \eqref{T2bound} in \cite[Theorem 22]{BaiLS} using completion of exponential sums, establishing the bound
$$
T_2(r,j,M)\ll \frac{M^4}{r}+M^2+r^{3/2}. 
$$   
It should be possible to extend these techniques to cover general moduli $r$ and our energies $E_2(r,j,M,H)$ and $E_4(r,j,M,H)$. Potentially, this could shorten the problematic $r$-range.
\item A place where a serious loss occurs is \eqref{CSapp2}, where we extend the $d$-range from $|d|\le D:=r^{1+\varepsilon}/L$ to the full set of residue classes $d$ modulo $r$. Progress could also be made if we would manage to utilize the restriction $|d|\le D$ at this place. Following the proof of \eqref{E2bound} in subsection 2.2, this boils down to efficiently counting the number of solutions $(h,m,d)$ to the congruence
$$
 h^2-kd^2m+k^2d^4\equiv 0 \bmod{r}
$$
in a box
\begin{equation}
\{(h,m,d)\in \mathbb{Z}^3:  1\le h\le H, \ 1\le m\le 4M,\ |d|\le D \}.
\end{equation}
(Recall here that $k$ is a multiplicative inverse of $j$ modulo $r$ and hence fixed.) 
The critical case in our application is when $H,M\asymp r^{1/3}$ and $D\asymp r^{2/3}$. A direct treatment of the additive energy $E_4$ along the same lines as that of $E_2$ in subsection 2.2 would lead us to a congruence with a polynomial in five variables of degree 16.  
Nontrivial bounds for the number of solutions $(x_1,...,x_s,y)$ in a box 
$$
[U_1,U_1+H]\times\cdots [U_s,U_s+H]\times [V,V+D]
$$
to congruences of the form 
$$
F(x_1,...,x_s)-y\equiv 0\bmod{r}
$$
with $F$ being a polynomial in $\mathbb{Z}[X_1,...,X_s]$ were derived in \cite{Kerr}. To make progress on our problem, we would need an extension of this result to more general polynomial congruences of the form
$$
G(x_1,...,x_s,y)\equiv 0\bmod{r}
$$
with solutions in the same box.
\item Non-trivial bounds for our bilinear sums 
$$
\Sigma(r,j,L,M,\boldsymbol{\alpha}, \boldsymbol{\beta})\quad  \mbox{and} \quad \Sigma(r,j,L,M,\boldsymbol{\alpha}, \boldsymbol{\beta},f)
$$ 
for much shorter summation ranges $|l|\le L$ and $1\le m\le M$ should be achievable unconditionally for smooth and prime power moduli using analogues of the van der Corput method from the theory of exponential sums. However, to tackle the large sieve with square moduli, we need to make unconditional progress for the {\it full} set of moduli $1\le r\le N^{1/2}$. If we restrict this set of moduli, we run into trouble when approximating real numbers $x$ by rational numbers $b/r$ in the underlying method of Wolke, outlined in \cite[section 4]{BaiLS}. This is because the quality of Diophantine approximation deteriorates.    
\item Our method might be adaptable to the situation when both variables come from modular roots. This would have potential applications to generalising Dunn's and Zaharescu's recent work \cite{DuZa} on the twisted second moment of modular half integral weight $L$-functions from prime to arbitrary moduli. 
\end{itemize}

\end{document}